\documentclass{qt2018author}
\usepackage[utf8]{inputenc}
\usepackage[english]{babel}
\usepackage{graphicx}
\usepackage{amssymb}
\usepackage{amsmath}
\usepackage[mathscr]{euscript}
\usepackage{accents}
\usepackage{color}

\usepackage{tikz}
\usetikzlibrary{decorations.markings}

\usepackage{microtype} 

\theoremstyle{plain}	
\newtheorem{theorem}{Theorem}[section]
\newtheorem{proposition}[theorem]{Proposition}

\theoremstyle{definition} 
\newtheorem{definition}[theorem]{Definition}
\newtheorem{example}[theorem]{Example}

\newcommand{\Lbrack}{[\![}
\newcommand{\Rbrack}{]\!]}

\title{Stratified spaces, Directed Algebraic Topology, and State-Sum TQFTs}
\author{I.\ J.\ Lee and D.\ N.\ Yetter}

\address[leei@rowan.edu]{I.\ J.\ Lee \\ Mathematics Department \\ Rowan University \\ Glassboro, NJ 08028 \\U.S.A}
\address[dyetter@math.ksu.edu]{D.\ N.\ Yetter \\ Department of Mathematics \\ Cardwell Hall \\ Kansas State University \\ Manhattan, KS 66506 \\ U.S.A}

\begin{document}
\tikzset{midarrow/.style={
		decoration={markings,
			mark= at position 0.5 with {\arrow{#1}},
		},
		postaction={decorate}
	}
}

\begin{abstract}

We apply the theory of directed topology developed by Grandis \cite{Gr1, Gr2} to the study of stratified spaces by describing several ways in which a stratification or a stratification with orientations on the strata can be used to produce a related directed space structure.  This description provides a setting for the constructions of state-sum TQFTs with defects of \cite{DPY, LY}, which we extend to a similar construction of a Dijkgraaf-Witten type TQFT in the case where the defects (lower dimensional strata) are not sources or targets, but sources on one side and targets on the other, according to an orientation convention.

\end{abstract}

\begin{classification} 
Primary:  55P99 ; Secondary:  57R56, 57Q99.
\end{classification}

\begin{keywords}
directed topology, stratified space, TQFT
\end{keywords}

\maketitle

\section{Introduction}

The subject of directed algebraic topology first arose under the motivation of applications to concurrency theory in computer science \cite{FGR1, FGR2} and has been extensively developped by Grandis \cite{Gr1, Gr2} in a manner parallel to classical homotopy theory.  It has, heretofore, found little direct application to matters in what might be regarded as ``core topology''.  It is the purpose of this note to remedy this by showing that directed algebraic topology admits applications to the study of stratified spaces, including manifolds equipped with a system of submanifolds as in \cite{CY, DPY, LY}.  Throughout, ${\bf I}$ denotes the standard unit interval, and all compositions are written in diagrammatic order unless parentheses indicate functional application, thus in any category considered herein, if $t(f) = s(g)$, $fg$ and $g(f)$ both denote the composition $f$ followed by $g$.

\section{Directed Topological Spaces}

We recall from Grandis \cite{Gr1}

\begin{definition}  A {\em directed topological space}  or {\em $d$-space} $X = (X,dX)$ is a topological space $X$, equipped with a set $dX$ of continuous maps $\phi:{\bf I}\rightarrow X$ satisfying

\begin{enumerate}
\item Every constant map $x:{\bf I}\rightarrow X$ for $x \in X$ is in $dX$.
\item $dX$ is closed under pre-composition with continuous weakly monotone functions from ${\bf I}$ to ${\bf I}$.
\item $dX$ is closed under concatenation of paths.
\end{enumerate}

\noindent Elements of $dX$ are called {\em directed paths} in $X$.  Note that the second condition implies that $dX$ is both closed under (weakly monotone) reparametrizations of paths and arbitary factorizations of paths with respect to concatenation, and indeed is equivalent to being closed under these two operations, since both reparameterizations of and entire path and all parametrizations of factors under concatenation can be given by paths resulting from pre-composition with continuous monotone functions.

A {\em map of directed spaces} $f:(X,dX)\rightarrow (Y,dY)$ is a continuous function $f:X\rightarrow Y$ such that $p \in dX$ implies $f(p) \in dY$.
\end{definition}

It is easy to see that the category of directed spaces and maps of directed spaces has products given by

\[ \prod_{j\in J} (X_j, dX_j) = (\prod_{j\in J} X_j, \{(p_j)_{j\in J} | \forall j p_j \in dX_j\}) \].

Grandis \cite{Gr1} observes that topological spaces equipped with certain other auxiliary structures naturally give rise to $d$-spaces.  In particular if $X$ is equipped with a preorder (a reflexive, transitive relation) $\preceq$, letting 

\[ dX := \{\phi:{\bf I}\rightarrow X \;\; | \;\; \mbox{\rm $\phi$ is continous and } a \leq b \; \mbox{\rm implies } \phi(a) \preceq \phi(b) \}, \]

\noindent that is the set of continuous paths monotone with respect to the preorder, gives a $d$-space structure.

It will be useful to recall that a preorder $\preceq$ can be considered as equivalent to an equivalence relation, together with a partial ordering of the equivalence classes, more precisely, it is an elementary exercise to show

\begin{proposition}
If $\preceq$ is a preorder on a set $A$, the relation $\equiv$ defined by $a \equiv b$ if and only if $a \preceq b$ and $b \preceq a$ is an equivalence relation, and $\preceq$ induces a partial ordering on the set of equivalence classes by $[a] \leq [b]$ if and only if $a \preceq b$ for any representatives $a$ and $b$ of the respective equivalence class, which condition is independent of the choice of representatives.

Conversely, given an equivalence relation $\equiv$ on a set $A$ and a partial ordering $\leq$ of the equivalence clases, there is a preorder on $A$ given by $a \preceq b$ if and only if $[a] \leq [b]$.
\end{proposition}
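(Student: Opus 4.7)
The plan is to verify each of the three claims in order, relying only on the defining properties of a preorder (reflexivity and transitivity) and the definitions of equivalence relation and partial ordering. Since the author explicitly labels this ``an elementary exercise,'' the main task is to organize the bookkeeping cleanly rather than surmount any substantive obstacle.

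First I would show that $\equiv$ is an equivalence relation. Reflexivity follows because $a \preceq a$ by reflexivity of $\preceq$, hence $a \equiv a$. Symmetry is immediate from the symmetric form of the defining conjunction ``$a \preceq b$ and $b \preceq a$.'' Transitivity follows by applying transitivity of $\preceq$ twice: if $a \equiv b$ and $b \equiv c$, then $a \preceq b \preceq c$ and $c \preceq b \preceq a$ yield $a \preceq c$ and $c \preceq a$.

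Next I would verify that $[a] \leq [b] :\Leftrightarrow a \preceq b$ is well-defined on equivalence classes. Suppose $a \equiv a'$ and $b \equiv b'$. If $a \preceq b$, then using $a' \preceq a$ and $b \preceq b'$ together with transitivity gives $a' \preceq b'$; the reverse implication is symmetric. This establishes that the definition does not depend on the choice of representatives. Reflexivity and transitivity of $\leq$ on the quotient are inherited directly from $\preceq$. Antisymmetry is exactly what the definition of $\equiv$ arranges: if $[a] \leq [b]$ and $[b] \leq [a]$, then $a \preceq b$ and $b \preceq a$, so $a \equiv b$, i.e.\ $[a] = [b]$.

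For the converse, given an equivalence relation $\equiv$ on $A$ together with a partial ordering $\leq$ of the equivalence classes, I would define $a \preceq b$ iff $[a] \leq [b]$. This definition does not reference representatives, so well-definedness is automatic. Reflexivity of $\preceq$ follows from reflexivity of $\leq$ (since $[a] \leq [a]$), and transitivity of $\preceq$ follows from transitivity of $\leq$. Thus $\preceq$ is a preorder on $A$, completing the correspondence. The only place where one must be slightly careful is the well-definedness step in the forward direction, but this is dispatched by two applications of transitivity.
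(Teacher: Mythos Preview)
Your proof is correct and complete. The paper does not actually supply a proof of this proposition; it introduces the statement as ``an elementary exercise'' and moves on, so there is no approach to compare against---your write-up is exactly the routine verification the authors had in mind.
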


In a similar way a weaker sort of relation than a preorder, but tied to the topology on $X$, will also induce a $d$-space structure:

\begin{definition} A {\em locally preordered space} is at topological space equipped with a reflexive relation $\preceq$ (termed a {\em precedence relation}) with the property that for every $x \in X$ there exists a neighborhood $U_x$ to which the restriction of $\preceq$ is a pre-order.
\end{definition}

Again Grandis \cite{Gr1} observes that a locally preordered space becomes a $d$-space when equipped with

\begin{eqnarray*} dX & := & \{\phi:{\bf I}\rightarrow X \;\; | \;\; \mbox{\rm $\phi$ is continous and } \\ & & \exists \epsilon > 0 \; |a-b|<\epsilon \; \mbox{\rm and } a \leq b \; \mbox{\rm implies } \phi(a) \preceq \phi(b) \}, \end{eqnarray*}

\noindent that is the set of continuous paths which are locally monotone, gives a $d$-space structure.

An aside to the reader:  the spirit of Grandis definition of the $d$-space structure from a local preorder is that every $\phi(x)$ for $x\in{\bf I}$ admits a neighborhood on which the precedence relation is a preorder and $\phi$ is monotone with respect to the precedence relation.  Because {\bf I} is compact, this can be restated with the choice of an $\epsilon$ for which $\phi$ is monotone on all open intervals of length $\epsilon$ or less.

Grandis \cite{Gr1} states as a passing observation a result which we will have cause to use, and therefore state as a proposition slightly extended by a more constructive description:

\begin{proposition} The set of $d$-space structures on a topological space $X$ forms a complete lattice with meet given by intersection of the sets of directed paths.  The join of a set of $d$-space structures $d_\iota X$
for $\iota \in {\mathcal I}$ for some indexing set $\mathcal I$ is given by

\begin{center}
\parbox{8cm}{

$(\bigvee_{\iota \in {\mathcal I}} d_\iota)X = $ 

\hspace{1cm}$\{ p:{\bf I}\rightarrow X\; |\; \exists 0 = a_0 \leq a_1 \leq \ldots \leq a_n = 1 $

\hspace{1.3cm}$\exists \iota_1, \ldots \iota_n \in {\mathcal I} \;\; p|_{[a_{j-1},a_j]} \in d_{\iota_j}X \} $ }
\end{center}

\noindent where restrictions of paths to subintervals are understood as paths with source {\bf I} via some (and therefore any) monotone parametrization of the subinterval.
\end{proposition}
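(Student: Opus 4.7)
The plan is to verify the two halves of the claim separately. For the meet, I would check that if $\{d_\iota X\}_{\iota \in {\mathcal I}}$ is any family of $d$-space structures on $X$, then $\bigcap_{\iota \in {\mathcal I}} d_\iota X$ again satisfies the three axioms: constants lie in every $d_\iota X$, and closure under pre-composition with weakly monotone maps and under concatenation pass through the intersection since each is a universal condition on a single path at a time. The intersection is then visibly the greatest $d$-space structure contained in every $d_\iota X$.

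For the join, write $DX$ for the set described by the displayed formula. I would first verify the three $d$-space axioms for $DX$. Constants are handled by the trivial partition $n = 1$ with any label $\iota_1$. Closure under concatenation is immediate: given $p, q \in DX$ with partitions $0 = a_0 \leq \cdots \leq a_n = 1$ and $0 = b_0 \leq \cdots \leq b_m = 1$, the concatenation of $p$ and $q$ admits the merged partition $0, \tfrac{1}{2}a_1, \ldots, \tfrac{1}{2}a_n, \tfrac{1}{2} + \tfrac{1}{2}b_1, \ldots, \tfrac{1}{2} + \tfrac{1}{2}b_m$ carrying the concatenated label sequence. Closure under pre-composition with a continuous weakly monotone $\alpha : {\bf I} \to {\bf I}$ is the one subtle step: given $p \in DX$ with partition $(a_j)$ and labels $(\iota_j)$, I would pull the partition back through $\alpha$ to a (possibly degenerating) partition $0 = b_0 \leq b_1 \leq \cdots \leq b_n = 1$ with $\alpha([b_{j-1}, b_j]) \subseteq [a_{j-1}, a_j]$; concretely $b_j$ can be taken as the point where $\alpha$ first reaches $a_j$, i.e.\ $b_j := \inf \alpha^{-1}([a_j, 1])$, with boundary conventions $b_0 := 0$ and $b_n := 1$. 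Then $(\alpha p)|_{[b_{j-1}, b_j]}$ is, up to reparametrization, $p|_{[a_{j-1}, a_j]}$ pre-composed with a continuous weakly monotone map, hence lies in $d_{\iota_j} X$ by axiom (2); any degenerate subinterval $b_{j-1} = b_j$ produces a constant piece, handled by axiom (1).

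That $DX$ is an upper bound of the family is immediate from the trivial partition, and the least-upper-bound property follows since any $d$-space structure $d'X$ containing every $d_\iota X$ must contain each piece $p|_{[a_{j-1}, a_j]} \in d_{\iota_j} X \subseteq d'X$ and hence, by its closure under concatenation, the whole of $DX$. Together with the top element (the set of \emph{all} continuous paths ${\bf I} \to X$, which trivially satisfies the three axioms), this exhibits the poset of $d$-space structures on $X$ as a complete lattice.

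The main obstacle will be the pre-composition case in verifying that $DX$ is itself a $d$-space structure: the map $\alpha$ may both stretch arcs and collapse them to points, so one cannot simply push the partition $(a_j)$ forward and must instead pull it back to a partition $(b_j)$ of the source, taking care to handle degenerate pieces---which are harmless since constants are always directed. All other verifications are routine.
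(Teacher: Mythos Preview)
Your proposal is correct and follows essentially the same approach as the paper's proof: verify the intersection is closed under the axioms (the paper calls this ``trivial''), then for the join show the described set is a $d$-space structure and is the least upper bound. Your treatment is more explicit than the paper's in the precomposition step---you give a concrete formula $b_j := \inf \alpha^{-1}([a_j,1])$ for the pulled-back partition and note the degenerate-interval case, whereas the paper simply asserts that ``pullbacks of the intervals in the decomposition will give a new decomposition''---but the underlying argument is the same.
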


\begin{proof}
As Grandis observes, it is trivial to see that the set of $d$-space structures on $X$ is partially ordered by containement, is closed under arbitrary intersections, and has a maximal element, the set of all paths in $X$.
This leaves the construction of joins to be the usual ``theological'' construction -- intersect all upper bounds -- that gives no insight into exactly what the join looks like.

To see that our constructively describe $\bigvee d_\iota$ is the join, observe that any directed space structure containing all the $d_\iota X$'s, being closed under concatenation, must contain the paths specified in our description.  Our set of paths also, trivially contains the constant paths and is closed under concatentation.  It remains only to see that it is closed under precomposition by arbitrary continuous (weakly) monotone maps from {\bf I}.  But this is easy to establish since the pullbacks of the intervals in the decomposition will give a new decomposition of {\bf I}, and precomposing each of the maps of subintervals with a parametrization by {\bf I} will give a directed path in the same $d_\iota X$ as the subinterval pulled back.
\end{proof}

We will not have need for the full breadth of Grandis's development of directed algebraic topology \cite{Gr2} in the present work, only the notion of fundamental category, which we now briefly describe in slightly different terms than those given in \cite{Gr1, Gr2}. 

First it is trivial to establish: 

\begin{proposition}  Let $\sim$ denote the equivalence relation on paths induced by weakly monotone reparametrization.  Then for any directed space $(X,dX)$, 

\[ {\mathcal C}_{(X,dX)} := (X, dX/\sim, -(0), -(1), *) \]

\noindent is a category, where $-(j)$ denotes evaluation at $j$, and $*$ denotes concatentation of reparametrization-classes of paths.  
\end{proposition}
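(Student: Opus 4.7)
The plan is to check each of the category axioms in turn, with the main work being to verify that the stated operations descend to equivalence classes and that the category laws hold only up to weakly monotone reparametrization.

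First I would set up notation: write $\sim$ as the equivalence relation generated by $p \sim p \circ h$ for $h : {\bf I} \to {\bf I}$ continuous, weakly monotone, with $h(0)=0$ and $h(1)=1$. Note that source and target descend to $dX/\sim$, because every such $h$ fixes the endpoints, and $dX$ is closed under such precompositions by axiom (2) of a $d$-space. The identity at $x \in X$ is the class of the constant path $c_x$, which belongs to $dX$ by axiom (1).

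Next I would check that concatenation $*$ is well defined on $\sim$-classes: if $p \sim p'$ by reparametrization $h$ and $q \sim q'$ by reparametrization $k$ (with $t(p)=s(q)$), then $p*q$ and $p'*q'$ differ by the piecewise reparametrization of ${\bf I}$ obtained by running $\tfrac{1}{2}h(2t)$ on $[0,\tfrac{1}{2}]$ and $\tfrac{1}{2}+\tfrac{1}{2}k(2t-1)$ on $[\tfrac{1}{2},1]$, which is continuous, weakly monotone, and fixes the endpoints. Associativity $(p*q)*r \sim p*(q*r)$ follows by exhibiting the standard piecewise-linear reparametrization of ${\bf I}$ that maps the standard thirds $[0,\tfrac{1}{3}],[\tfrac{1}{3},\tfrac{2}{3}],[\tfrac{2}{3},1]$ to the left-associated quarters $[0,\tfrac{1}{4}],[\tfrac{1}{4},\tfrac{1}{2}],[\tfrac{1}{2},1]$; this map is continuous, weakly monotone, and fixes endpoints, hence witnesses the equivalence.

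For the unit laws, $c_{s(p)} * p \sim p$ is established by the reparametrization sending $[0,\tfrac{1}{2}]$ constantly to $0$ and $[\tfrac{1}{2},1]$ linearly onto $[0,1]$, which is continuous and weakly monotone (here is where one really needs the ``weakly'' in weakly monotone). Similarly $p * c_{t(p)} \sim p$ is exhibited by stretching $[0,\tfrac{1}{2}]$ linearly onto $[0,1]$ and holding $[\tfrac{1}{2},1]$ constantly at $1$.

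The only mildly nontrivial point — really the sole obstacle worth flagging — is the verification that concatenation is well defined on classes: one must be convinced that a piecewise-defined weakly monotone endpoint-fixing reparametrization of ${\bf I}$ is again a legitimate reparametrization of the sort permitted by axiom (2), and that the resulting precomposed path is exactly the concatenation of the reparametrized halves. Once this compatibility is in hand, each of associativity and the unit laws reduces to exhibiting an explicit continuous weakly monotone self-map of ${\bf I}$ fixing $0$ and $1$, so the remaining verifications are mechanical.
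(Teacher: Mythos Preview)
Your proposal is correct and straightforwardly fills in the routine verifications; the paper gives no proof at all, simply declaring the result ``trivial to establish.'' One minor quibble: your associativity reparametrization is described as mapping thirds to the partition $[0,\tfrac14],[\tfrac14,\tfrac12],[\tfrac12,1]$, but neither $(p*q)*r$ nor $p*(q*r)$ uses thirds---the direct witness is the piecewise-linear map sending $[0,\tfrac14],[\tfrac14,\tfrac12],[\tfrac12,1]$ to $[0,\tfrac12],[\tfrac12,\tfrac34],[\tfrac34,1]$ (or go through the balanced thirds version as an intermediary, which is perhaps what you intended).
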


Let $\uparrow\! {\bf I}$ denote ${\bf I}$ with the directed space structure given by all (weakly) monotone increasing maps.

\begin{proposition} If $(X,dX)$ is a directed space, then there is a bijection between $dX$ and the
set of maps of directed spaces from $\uparrow\! {\bf I}$ to $(X,dX)$ given by

\[ p:\uparrow\! {\bf I}\rightarrow (X,dX)\;\; \longleftrightarrow\;\; Up: {\bf I}\rightarrow X \in dX \]

\noindent where $U$ denotes the underlying functor from $d$-spaces to topological spaces.
\end{proposition}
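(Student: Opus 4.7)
The plan is to verify that the assignment $p \mapsto Up$ is well-defined (lands in $dX$), injective, and surjective onto $dX$.

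First, I would establish well-definedness. Given a map of directed spaces $p : \uparrow\! {\bf I} \to (X, dX)$, I need $Up \in dX$. The identity map ${\rm id}_{\bf I} : {\bf I} \to {\bf I}$ is weakly monotone increasing, hence belongs to $d(\uparrow\! {\bf I})$. Since $p$ preserves directed paths, $p \circ {\rm id}_{\bf I} = Up$ lies in $dX$.

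Next, injectivity of $p \mapsto Up$ is immediate: two maps of directed spaces with the same underlying continuous function are equal as morphisms, because the directed-space structures on source and target are fixed data and the morphism is determined by its underlying map on points.

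For surjectivity, given $\phi \in dX$, I would take the underlying continuous function of $\phi$ as a candidate map $p : \uparrow\! {\bf I} \to (X, dX)$ and verify that it preserves directed paths. An arbitrary $\psi \in d(\uparrow\! {\bf I})$ is, by definition, a weakly monotone continuous function ${\bf I} \to {\bf I}$, so $p \circ \psi = \phi \circ \psi$ is the pre-composition of $\phi \in dX$ with a continuous weakly monotone function. Axiom (2) of the definition of a $d$-space then guarantees $\phi \circ \psi \in dX$, so $p$ is indeed a map of directed spaces, and $Up = \phi$.

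There is no real obstacle here; the statement is essentially a reformulation of axiom (2) together with the observation that the identity is a directed path of $\uparrow\! {\bf I}$. The only subtlety worth flagging is the consistency of the two uses of ``directed path'': one as an element of $dX$, the other as a morphism out of $\uparrow\! {\bf I}$, and the proposition just records that these two notions coincide canonically.
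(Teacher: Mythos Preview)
Your proof is correct and follows essentially the same approach as the paper's: both use that the identity on ${\bf I}$ is a directed path in $\uparrow\!{\bf I}$ to get well-definedness, and that directed paths in $\uparrow\!{\bf I}$ are exactly the weakly monotone self-maps together with axiom (2) to get surjectivity. Your explicit remark on injectivity is a minor addition the paper leaves implicit, since a map of $d$-spaces is by definition a continuous map satisfying a property.
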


\begin{proof}
Plainly if $p$ is a map of directed spaces, in particular it takes the identity map of $I$ to a directed path in $X$, namely $Up$.  Conversely given a directed path $\pi \in dX$, we need to show that $\pi$ maps every directed path in $\uparrow\! {\bf I}$ to a directed path in $X$.  But directed paths in $\uparrow\! {\bf I}$ are exactly weakly monotone maps from {\bf I} to {\bf I} so this follows from the closure conditions on $dX$.
\end{proof}

We hereinafter identify the elements of $dX$ with $d$-space maps from 
$\uparrow\! {\bf I}$ to $(X,dX)$.

\begin{definition}
Two parallel arrows $[p_0]$ and $[p_1]$ in ${\mathcal C}_{(X,dX)}$ are {\em immediately $d$-homtopic} if there exists a map of directed spaces $h: \uparrow\! {\bf I}^2\rightarrow (X,dX)$ such that $h(-,i)$ represents $[p_i]$ for $i=0,1$. 

Two parallel arrows $[p_0]$ and $[p_1]$ are {\em $d$-homtopic} if they are equivalent under the equivalence relation generated by immediate $d$-homotopy.  We write $\Lbrack p \Rbrack$ for the $d$-homotopy class represented by a directed path $p$ and $\simeq$ for the relation of $d$-homotopy.
\end{definition}

When it is observed that immediate $d$-homotopies can be concatenated in first coordinate and that there is always a trivial immediate $d$-homotopy from any reparametrization class of paths to itself, it is easy to see that $d$-homotopy is not merely an equivalence relation, but a congruence of categories.

\begin{definition}
The {\em fundamental category} of a directed space $\uparrow\! \Pi(X,dX)$ is the quotient category of ${\mathcal C}_{(X,dX)}$ by the congruence $\simeq$.
\end{definition}

\section{Filtered and Stratified Spaces}

As our purpose is to connect Grandis's work \cite{Gr1, Gr2} to more traditional notions in topology, we now introduce those, beginning with a more general notion of filtration than is usually given.

\begin{definition}
If $({\mathcal O}, \leq)$ is a totally ordered set with maximum element $\infty$, an $\mathcal O$-filtration of a topological space $X$ is a set of subspaces $X_k$ for $k \in {\mathcal O}$ such that 
\begin{enumerate} 
\item $X_\infty = X$ and
\item $k \leq \ell$ implies $X_k \subseteq X_\ell$.
\end{enumerate}
\end{definition}

It is most common for $\mathcal O$ to be $\{ 0,1,\ldots n\}$ for some natural number $n$ with the usual ordering (so that $n$ is $\infty$ in the general description).  

An $\mathcal O$-filtered space becomes preordered space, and thus a $d$-space in an obvious way:
let $x \leq y$ whenever  $x\in X_k$ and $y\in X_\ell$ for $k \leq \ell$.

We refer to this as the {\em source preorder} associated to a filtration.  The opposite preorder will be called the {\em target preorder}.

Among filtered spaces, the most important to geometric topology are stratified spaces.  There are many definitions of stratified space in the literature.  Some require neighborhoods of points in each part of the filtration admit neighborhoods of a particular form -- a product of an open in the stratum and a cone on a manifold.  In differential topology, the ``Whitney conditions'' on the behavior of tangent spaces at and near strata are usually required.  For our purposes one of the simplest will suffice:

\begin{definition}
A {\em stratified space} is a $(\{0,1,\ldots n\}, \leq)$-filtered space with $X_k\setminus X_{k-1}$ a $k$-manifold, and $\overline{X_k} \setminus X_k \subset X_{k-1}$.  A {\em stratum} of a stratified space is a connected component of some $X_k\setminus X_{k-1}$.  We say two strata $S_i$ and $S_j$ are {\em adjacent} if one is contained in the closure of the other.
\end{definition}

In \cite{DPY} and \cite{LY} defect TQFTs generalizing Dijkgraaf-Witten theory to certain, very simple, stratified spaces ($\emptyset = X_0 \subset X_1 \subset X_2 = X$, for $X$ a closed oriented surface and $X_1$ a curve contained in it, and $\emptyset = X_0 \subset X_1 \subset X_2 \subset X_3 = X$, for $X$ a 3-manifold, $X_1$ a link contained in it, and $X_2$ a Seiftert surface to the link, respectively) were constructed.  In both cases the ``untwisted'' theory could be interpreted as counting functors from a skeletal category equivalent to the fundamental category of the $d$-space given by the source preorder, so that paths may leave lower-dimensional strata, but not enter them.

In the stratified setting, as opposed to the more general filtered setting, there are other ways in which directed space structures can be induced.  For instance in the circumstance of \cite{LY} if the ``knot'' is actually a link, some of the components could be sources and others targets.  In particular, for two component links (or links with their components partitioned into a first sublink and a second) a theory corresponding to that of [5] can be developped in which the first component of the link and the Seifert surfaces are source defects, while the second component of the link is a target defect.  This, again is described by a preordered space, the preorder being chaotic on each stratum, and ordering the strata with the first component of the link less than the Seifert surface, less than the bulk, less than the second component of the link, the colorings for a Dijkgraaf-Witten theory in this setting would lie in a {\textsf 4}-parcel in the terminology of \cite{LY}.


More flexible ways of constructing a directed space from stratified space are available by tying a local preorder to the stratification.

\begin{definition}
If $X$ is a stratified space with strata $S_i$ for $i \in {\mathcal I}$, a {\em narrow cover} is an open covering
by open sets $U_i$ for $i \in {\mathcal I}$ such that

\begin{enumerate}
\item $S_i \subset U_i$ for all $i\in {\mathcal I}$,
\item $U_i$ is connected for all $i \in {\mathcal I}$,
\item If $U_i \cap S_j \neq \emptyset$ then either $i = j$ or the dimension of $S_j$ is greater than the dimension of $S_i$ and $S_i$ and $S_j$ are adjacent.  
\end{enumerate}
\end{definition}

For a stratum $S_i$, we will denote the set of strata adjacent to $S_i$ with dimension greater than or equal to the dimension of $S_i$ by $N_i$, so the last condition may be rephrased as for all $S_j$,  $U_i \cap S_j = \emptyset$ or $S_j \in N_i$.

\begin{proposition} If $X$ is a stratified space with strata $S_i$ for $i \in {\mathcal I}$, then the set of narrow covers ${\bf Nar}(X)$ is partially ordered by $\{U_i \} \leq \{V_i\}$ whenever $\forall i\in{\mathcal I}$ $U_i \subset V_i$
and forms a meet semilattice under  $\{U_i \} \wedge \{V_i\} = \{W_i\}$ for $W_i$ the connected component of $U_i \cap V_i$ which contains $S_i$.
\end{proposition}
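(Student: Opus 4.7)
The plan is to dispense with the partial order claim quickly and then focus on verifying that the proposed $\{W_i\} = \{U_i\}\wedge\{V_i\}$ is a well-defined narrow cover serving as the greatest lower bound. The partial order assertion is automatic: componentwise set inclusion on any family indexed by $\mathcal{I}$ is reflexive, antisymmetric, and transitive, so $\leq$ is a partial order on $\mathbf{Nar}(X)$.

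For the meet, I would first check that $\{W_i\}$ qualifies as a narrow cover. Each stratum $S_i$ is connected (by definition, as a connected component of $X_k\setminus X_{k-1}$), and $S_i\subset U_i\cap V_i$, so $S_i$ lies entirely in a single connected component of $U_i\cap V_i$, namely $W_i$; this verifies condition (1) and confirms that $W_i$ is nonempty. Condition (2), connectedness of $W_i$, holds by construction. Condition (3) is inherited from $\{U_i\}$ and $\{V_i\}$: if $W_i\cap S_j\neq\emptyset$, then $U_i\cap S_j\neq\emptyset$ and $V_i\cap S_j\neq\emptyset$, so either cover's narrow property forces $j=i$ or $\dim S_j>\dim S_i$ with $S_i$ and $S_j$ adjacent. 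Finally, coverage holds because every $x\in X$ lies in a unique stratum $S_k$, and then $x\in S_k\subset W_k$.

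Next, I would establish maximality. The inequalities $\{W_i\}\leq\{U_i\}$ and $\{W_i\}\leq\{V_i\}$ are immediate because $W_i\subset U_i\cap V_i$. For the universal property, suppose $\{T_i\}$ is a narrow cover satisfying $T_i\subset U_i$ and $T_i\subset V_i$ for all $i$. Then $T_i\subset U_i\cap V_i$, and since $T_i$ is connected and contains $S_i$, it must be contained in the connected component of $U_i\cap V_i$ containing $S_i$, which is $W_i$. Hence $\{T_i\}\leq\{W_i\}$, and $\{W_i\}$ is the greatest lower bound.

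The main (and only) subtle point is that each $W_i$ must be \emph{open} to qualify as a member of the narrow cover, which requires connected components of open sets to be open, i.e.\ local connectedness of $X$. This is immediate when $X$ is itself a manifold, as in the applications in \cite{DPY, LY}; for the definition of stratified space given here one invokes local connectedness coming from the fact that each $X_k\setminus X_{k-1}$ is a manifold together with the closure condition $\overline{X_k}\setminus X_k\subset X_{k-1}$. I would either note this explicitly as an additional standing hypothesis or verify it as a brief lemma before giving the above argument.
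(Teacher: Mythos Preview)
Your argument is correct and follows the same overall plan as the paper: verify that $\{W_i\}$ is a narrow cover and then show it is the greatest lower bound. The paper is terser---it simply asserts that the three conditions hold ``by construction'' and that $\{W_i\}$ is a lower bound, whereas you spell out each condition, including the covering property, explicitly.

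The one place where the arguments differ slightly is in establishing that $\{W_i\}$ is the \emph{greatest} lower bound. The paper argues informally that enlarging any $W_i$ either destroys connectedness (condition (2)) or pushes outside $U_i$ or $V_i$; you instead give the standard universal-property argument, taking an arbitrary lower bound $\{T_i\}$ and using connectedness of $T_i$ together with $S_i\subset T_i\subset U_i\cap V_i$ to conclude $T_i\subset W_i$. These are two phrasings of the same underlying fact (that $W_i$ is the maximal connected subset of $U_i\cap V_i$ containing $S_i$), but your version is cleaner and avoids any ambiguity about what ``enlarging'' means.

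You also flag a genuine point the paper passes over in silence: that $W_i$, as a connected component of an open set, is open only under a local-connectedness hypothesis on $X$. Noting this explicitly is a welcome addition.
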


\begin{proof}
That the relation is a partial ordering is clear.


By construction $\{W_i\}$ satisfies all three conditions to be a narrow cover and is a lower bound on $\{\{U_i \}, \{V_i\}\}$.  But enlarging any of the opens in $\{W_i\}$ either produces an open cover which fails condition (2) or a narrow cover which is not less than one of $\{U_i\}$ or $\{V_i\}$, so that $\{W_i\}$ is the meet.
\end{proof}

\begin{definition}
If $X$ is a stratified space with strata $S_i$ for $i \in {\mathcal I}$, a {\em stratified local preorder subordinate to a narrow cover $U_i$ for $i \in {\mathcal I}$} is a local preorder, $\preceq$, the restriction of which to each $U_i$ is a preorder, with the equivalence classes of the associated equivalence relation being unions of connected components of the sets $U_i \cap S_j$ for $S_j$ ranging over the strata.
\end{definition}


In fact, the notion of stratified local preorder is independent of the choice of narrow cover in a sense made precise by

\begin{proposition}
If $\preceq$ is a stratified local preorder subordinate to the narrow cover $\{U_i | i \in {\mathcal I}\}$, and $\{V_i | i\in {\mathcal I}\}$ is a narrow cover with $V_i \subseteq U_i$ for all $i\in {\mathcal I}$, then $\preceq$ is a stratified local cover subordinate to $\{V_i | i\in {\mathcal I}\}$.
\end{proposition}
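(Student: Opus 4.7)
The plan is simply to verify the three conditions that define a stratified local preorder subordinate to $\{V_i\}$, using the corresponding facts for $\{U_i\}$. First, being a local preorder on $X$ is a property of the relation $\preceq$ alone and makes no reference to any narrow cover, so it is inherited from the hypothesis.

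Second, for each $i \in \mathcal{I}$, the restriction $\preceq|_{V_i}$ is obtained by restricting the preorder $\preceq|_{U_i}$ further to the subset $V_i \subseteq U_i$; since reflexivity and transitivity are trivially preserved under restriction, $\preceq|_{V_i}$ is a preorder.

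Third, for the equivalence class condition, I would observe that the associated equivalence relation $\equiv$ is defined pointwise (by $a \equiv b$ iff $a \preceq b$ and $b \preceq a$), so the equivalence classes of $\preceq|_{V_i}$ are precisely the nonempty intersections with $V_i$ of the equivalence classes of $\preceq|_{U_i}$. By hypothesis, each equivalence class $E$ of $\preceq|_{U_i}$ is a union of connected components of sets of the form $U_i \cap S_j$, so $E \cap V_i$ is a union of sets of the form $C \cap V_i$ where $C$ is a connected component of some $U_i \cap S_j$. It then suffices to show each such $C \cap V_i$ is a union of connected components of $V_i \cap S_j$. This follows because $V_i \cap S_j \subseteq U_i \cap S_j$, so every connected subset of $V_i \cap S_j$ is connected in $U_i \cap S_j$ and therefore lies in a unique connected component of $U_i \cap S_j$; hence $C \cap V_i$ is exactly the union of those connected components of $V_i \cap S_j$ that happen to lie in $C$.

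The main (minor) obstacle is the third step: one must be careful about how connected components of an open set relate to those of a smaller open subset. The argument rests on the observation that shrinking the ambient open set can only refine the partition into connected components, never coarsen it, which is exactly the compatibility needed for the ``union of connected components'' condition to descend from $\{U_i\}$ to $\{V_i\}$.
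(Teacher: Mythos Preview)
Your proposal is correct and follows essentially the same approach as the paper: restrict the preorder, observe that equivalence classes in $V_i$ are the intersections with $V_i$ of equivalence classes in $U_i$, and then use that passing from $U_i\cap S_j$ to the smaller $V_i\cap S_j$ can only refine the partition into connected components. Your justification of this last point is in fact slightly more explicit than the paper's.
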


\begin{proof}

Now for each stratum $S_i$, since $V_i \subseteq U_i$ it is clear that the restriction of $\preceq$ to $V_i$ is a preorder, since the restriction of a preorder to a subset is again a preorder.  

It remains to show that the equivalence classes induced by the restriction are unions of connected components of sets of the form $V_i \cap S_j$ for $S_j$ ranging over the strata.  Now suppose $x \in V_i$.  Let $[x]_U$ denote the equivalence class of $x$ under the symmetrization of the restriction of $\preceq$ to $U_i$, and similarly $[x]_V$ the equivalence class of $x$ under the symmetrization of the restriction of $\preceq$ to $V_i$.  By hypothesis $[x]_U = \cup_{j,k} C_{j,k}$, where the $C_{j,k} \subset U_i \cap S_j$ are among the connected components of $U_i \cap S_j$.  Now $V_i \subseteq U_i$, and it is easy to see that $[x]_V = V_i \cap [x]_U$ since the restriction of the symmetrization of a relation is the symmetrization of the restriction.  Thus
$[x]_V = \cup_{j,k} (V_i \cap C_{j,k})$  But, as $V_i \subseteq U_i$, it follows that $V_i \cap S_j \subseteq U_i \cap S_j$, so $V_i \cap C_{j,k}$, while it need not be connected, is necessarily a union of connected components of $V_i \cap S_j$, and we are done.

\end{proof}

We will thus refer to a local preorder as a stratified local preorder if there exists a narrow cover $\{U_i\}$ for which it is a stratified local preorder subordinate to $\{U_i\}$.

On the other hand, in the circumstance of the previous proposition, there may be stratified local preorders subordinate to $\{V_i | i\in {\mathcal I}\}$ which are not subordinate to $\{U_i | i \in {\mathcal I}\}$:  consider the stratified space $\{1\} \subset S^1$.   The only stratified local preorders subordinate to the maximal narrow cover, $\{ S^1, S^1\setminus \{1\}\}$, are the actual preorders with $\{1\}$ and $S^1\setminus \{1\}$ as equivalence classes.  However, for the narrow cover $\{ \{e^{\theta i} | \theta \in (-1,1)\}, S^1\setminus \{1\}\}$, the relation given by --
$e^{\theta i }\preceq e^{\phi i}$ for $\theta, \phi \in [-\pi, \pi)$ exactly when $\theta \neq 0 \neq \phi$ or  $-1 < \theta \leq \phi < 1$ -- is also a stratified local preorder subordinate to $\{ \{e^{\theta i} | \theta \in (-1,1)\}, S^1\setminus \{1\}\}$, as is the relation given by the same conditions with $\phi$ and $\theta$ interchanged.

This last example readily generalizes to the case of a manifold with an orientable submanifold of codimension 1:  Choose a bicollar neighborhood of the submanifold.  The bicollar neighborhood and the complement of the submanifold form a narrow cover.  Now preorder the bicollar neighborhood with a preorder which is chaotic on each collar and on the submanifold, but for which one collar is less than the submanifold which is less than the other collar, and extend it to a precedence relation on the whole manifold by taking its union with the chaotic equivalence relation on the complement of the submanifold.  In cases where the submanifold does not separate (or has a component which does not separate), as with the point on the circle, the result will be a stratified local preorder, but not a preorder.

In cases where the ambient manifold and submanifold are both oriented, the choice of ordering can be made on the basis of orientations:  let the submanifold be greater than the collar for which the orientation of the submanifold is the boundary orientation, and less than the collar for which its orientation is opposite the boundary orientation. 

We will next consider how this last construction can be generalized to stratified spaces with oriented strata, and will see that while this will always give rise to a directed space, only in some instances will it give rise to a locally preordered space.  In order for orientations on strata to agree or disgree with boundary orientations induced by the orientations on higher dimensional strata, it is necessary for the stratification to be of a restricted type:  the closures of strata must be manifolds with boundary, and more than that, all strata must admit neighborhoods which are unions of collars.   This precludes many interesting examples of statified spaces, as, for instance complex varieties stratified by iterated singular sets, to which the theory developed thus far applies.  It does, however, include the sort of examples from low-dimensional geometric topology in \cite{CY, DPY, LY}.  To be precise we define a topological analogue of the stark neighborhoods in the PL theory developped in \cite{CY}.

\begin{definition}  A {\em collared stratification} of a space $X$ is a stratification  $X_0 \subset \ldots \subset X_n = X$ in which the closure of each stratum $S \subseteq X_k$ is a (topological) $k$-manifold with boundary, and, moreover $X$ admits a narrow cover $\{U_i\}$ in which for each open $U_i $  and each stratum $S_j \subset X_d$, $U_i \cap S_j$ admits a neighborhood in $U_i \cap X_{d+1}$ homeomorphic to $(U_i \cap S_j) \times CF$, for $CF$ the cone on a finite set of points, $F$, and moreover when $S_j = S_i$, this neighborhood may be taken to be all of $U_i \cap X_{d+1}$ 

We call such a narrow cover a {\em multicollar (narrow) cover}.
\end{definition}

Now given a collared stratification and a choice of orientation for each stratum, we can define two relations on the strata of each $U_i$ in a multicollar cover $\{U_i\}$ induced by the orientations:

\begin{definition} If $S$ and $T$ are two oriented strata of the restriction of the filtration to $U_i$, we say $T$ boundary covers $S$ (resp. $S$ anti-boundary covers $T$) denoted $S \stackrel{\prec}{\cdot}_+ T$ (resp. $T\stackrel{\prec}{\cdot}_- S$) if 

\begin{itemize}
\item $\dim(T) = \dim(S) -1$
\item $S$ and $T$ are adjacent
\item the boundary orientation induced on $T$ by the orientation of $S$ agrees with the orientation of $T$ (resp. the boundary orientation induced on $T$ by the orientation of $S$ is opposite to the orientation of $T$).
\end{itemize}
 
These relations then induce relations on the points of $U_i$ for which we use the same notation:  $x \stackrel{\prec}{\cdot}_+ y$ (resp. $x \stackrel{\prec}{\cdot}_- y$) when the strata they lie in satisfy the relation denoted by the same symbol.

The transitive reflexive closures of the relations $\stackrel{\prec}{\cdot}_+$ and $\stackrel{\prec}{\cdot}_-$ each induce partial orderings of the strata of $U_i$, which we denote $\preceq_{U_i, +}$ and $\preceq_{U_i,-}$, respectively and corresponding preorderings on the elements of $U_i$, which are chaotic on the strata of $U_i$.

\end{definition}

In general these partial orderings of the local strata may not only have incomparable elements, but may even have disconnected incidence diagrams.  Observe that chains of local strata with respect to $\preceq_{U_i,+}$ decrease in dimension, while chains of local strata with respect to $\preceq{U_i,-}$ increase in dimension.

Using these notions, we can construct several different $d$-space structures on a space with a collared stratification and oriented strata.  Observe that taking the union of the chaotic preorders on the strata with the relations $\preceq_{U_i,+}$, (resp. $\preceq_{U_i,-}$) on each $U_i$, extended to the whole space, gives a local preorder, and thus a directed space structure $d_{+or}(X)$ (resp. $d_{-or}(X)$).  We refer to this as the {\em descending (resp. ascending) oriented $d$-space structure}.

From these and the stratification, we can define two more $d$-space structures.

\begin{definition}
Let $X$ a space equipped with a collared stratification and orientations of its strata.  Let $\{U_i\}$ be multicollar narrow cover $\{U_i\}$.  

The {\em full oriented $d$-space structure} on $X$, $d_{for}(X)$ is the join of the descending and ascending oriented $d$-space structures.

The {\em restricted  oriented $d$-space structure} on $X$, $d_{ror}(X)$  is given by the set of all paths for which there exists $0 = a_0 < a_1 <\ldots < a_n = 1$ for which each $[a_k, a_{k+1}]$ is mapped into some $U_i \cap (S_i \cup S_j)$ for $\dim(S_j) = \dim(S_i) + 1$
and the restriction of the path $[a_k, a_{k+1}]$ is either monotone increasing with respect to $\preceq_{U_i,+}$ or monotone increasing with respect to $\preceq_{U_i,-}$.
\end{definition}

Neither structure is typically given by a local preorder, once there are strata of codimension greater than 1.    For the restricted oriented $d$-space structure, this is easy to see, since monotonicity with respect to a local preorder cannot enforce the dimension restriction.

For the full oriented $d$-space structure, we will illustrate the point with a few local models.    We suppress the mention of the neighborhood $U_i$ in the subscripts throughout the examples.  In the first, there is, in fact, a (local) preorder, which induces the $d$-space structure:

\begin{example}
Consider the stratification of (a neighborhood of the origin in) ${\mathbb R}^2$ with the strata depicted in the figure below, assuming the counterclockwise orientation on the plane, the origin positively oriented and the axes oriented as indicated by the arrows.

$$\begin{array}{c}
  \includegraphics[width=3.5in]{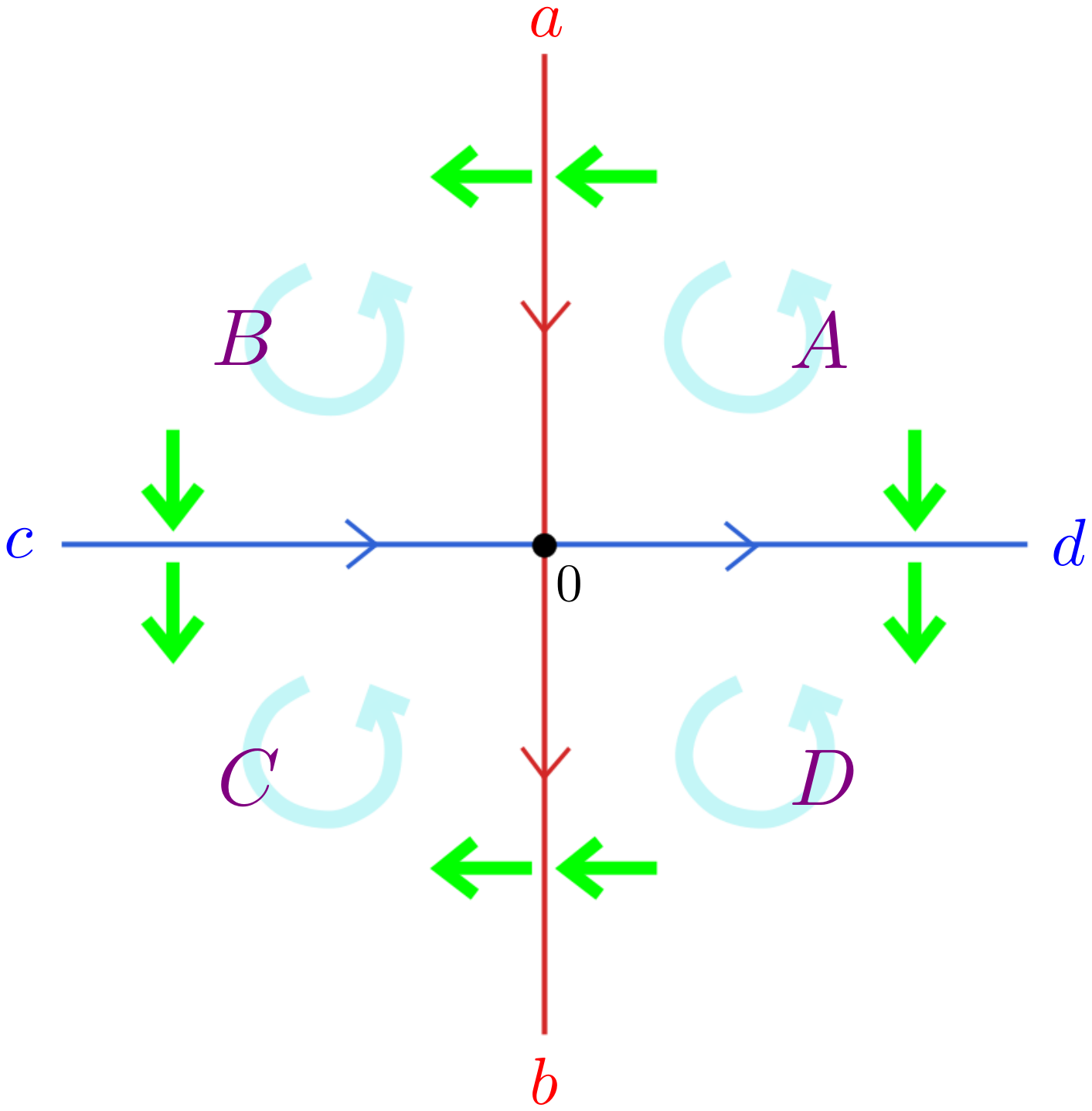}
\end{array}$$

The orientations induce the following local (strict) orderings of the strata in any multicollar neighborhood of the stratum listed between the other two. 
\begin{center}
\begin{tabular}{ll}
 $\displaystyle a \preceq_+ 0 \preceq_- b \,\, ,$ & $\displaystyle c \preceq_+ 0 \preceq_- d $\\
 \\
 $\displaystyle A \preceq_+ a \preceq_- B \,\, , $ & $\displaystyle A \preceq_+ d \preceq_- D $\\
 \\
 $\displaystyle B \preceq_+ c \preceq_- C \,\, , $ & $\displaystyle D \preceq_+ b \preceq_- C $\\ 
\end{tabular}
\end{center}

It is easy to see that multicolar neighborhoods of the axes are preordered by the ordering of the strata above with preorders chaotic on the strata.  In this case a multicolar neighborhood of the origin is also preordered by the total ordering of strata

\[ A < a < B < c < 0 < d < D < b < C .\]

It is easy to see that $d$-space structure given by this preorder is $d_{for}$:  decomposing a path monotone with respect to this preorder into subintervals with endpoints where the path changes stratum gives the required decomposition (the monotonicity requirements on the decomposition following from the global monotonicity).  The converse follows since here all instances of $\preceq_{U_i,+}$ and $\preceq_{U_i,-}$ (as relations on strata) assembled into the total ordering of the strata.

\end{example}

However, it is not always the case that $d_{for}$ arises from a local preordering.

\begin{example}
Consider the stratification of (a neighborhood of the origin in) ${\mathbb R}^2$ with the strata depicted in the figure below, assuming the counterclockwise orientation on the plane, the origin positively oriented and the axes oriented as indicated by the arrows.

$$\begin{array}{c}
  \includegraphics[width=3in]{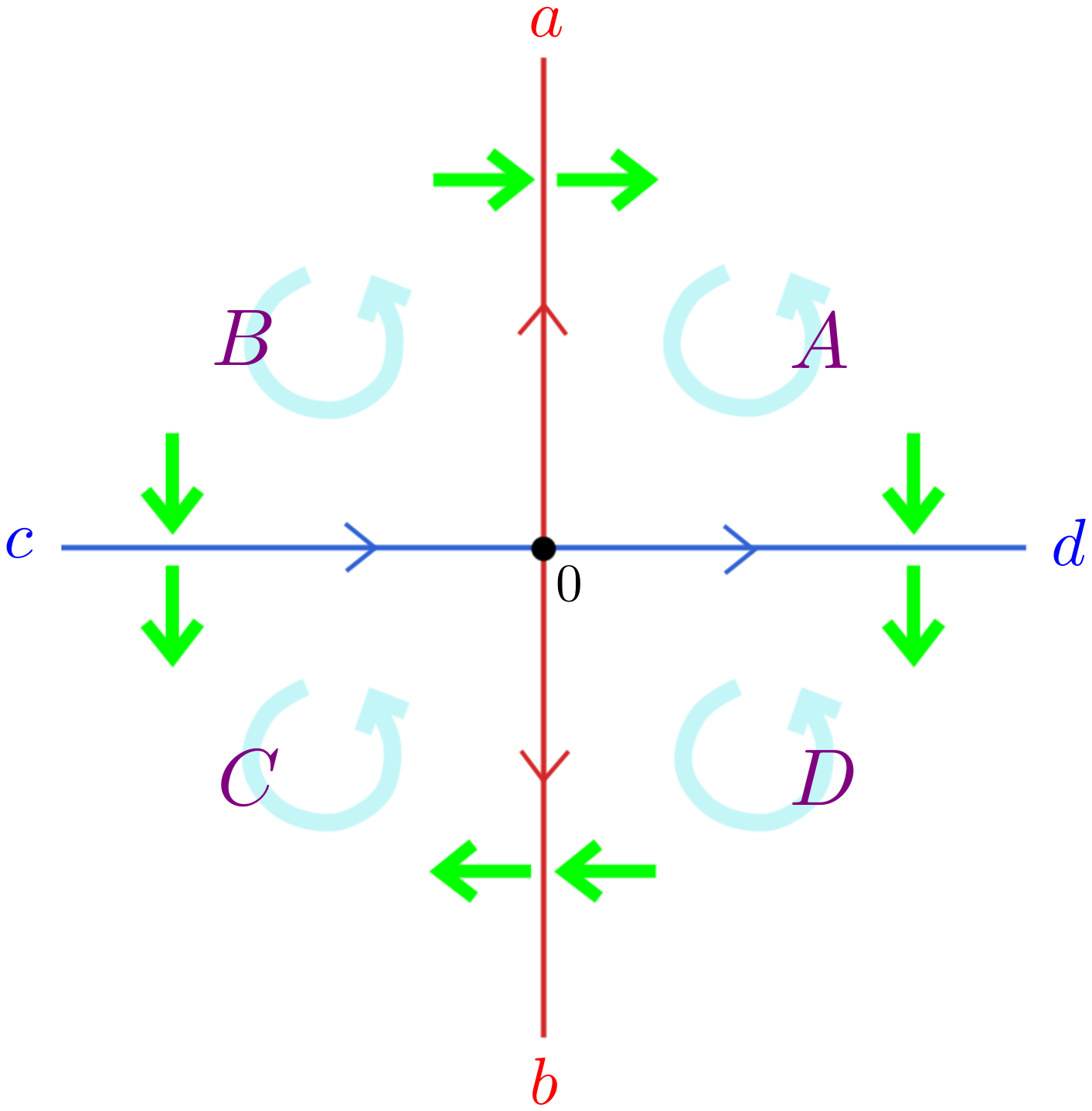}
\end{array}$$

Again the orientations induce the following local (strict) orderings of the strata in any multicollar neighborhood of the stratum listed between the other two: 
\begin{center}
\begin{tabular}{ll}
 $\displaystyle 0 \preceq_- a \,\, , \,\, 0 \preceq_- b \,\, , $ & $\displaystyle c \preceq_+ 0 \preceq_- d $\\
 \\
 $\displaystyle A \preceq_+ a \preceq_- B \,\, , $ & $\displaystyle A \preceq_+ d \preceq_- D $\\
 \\
 $\displaystyle B \preceq_+ c \preceq_- C \,\, , $ & $\displaystyle D \preceq_+ b \preceq_- C $\\ 
\end{tabular}
\end{center}

However, in this case, there is no local preorder which induces $d_{for}$ structure, as the reflexive-transitive closure of the orderings of the strata in any neighborhood of the origin will be the chaotic equivalence relation.
\end{example}

On the basis of the two examples, one might be led to believe that in cases where the filtration is given by oriented immersed submanifolds, the directed space structure induced by the orientations would be given by a local preorder.  This, however, is not the case, as the following example in 3 dimensions shows

\begin{example}
Consider the filtration of ${\mathbb R}^3$,  $X_0 \subset \ldots \subset X_3$, with $X_3$ being all of ${\mathbb R}^3$, $X_2$ the union of the coordinate planes, $X_1$ the union of the coordinate axes, and $X_0$ the origin. If ${\mathbb R}^3$ is given the right-hand rule orientation, the coordinate planes are oriented counter-clockwise when viewed from the first octant and axes oriented from $-\infty$ to $\infty$ as indicated by the arrows.  

In this case,  denoting the semi-axes by the name of their coordinate, and the quadrants by the names of their two coordinates, we have 
 
\[ x \preceq_- xy \preceq_+ y \preceq_- yz \preceq_+ z \preceq_- xz \preceq_+ x  \]

\noindent from which it immediately follows that no preorder in a neighborhood of the origin will induce
the full oriented $d$-space structure.
\end{example}

The detailed structure of the fundamental category is easier to work out for $d_{ror}$ than for $d_{for}$ because the condition that directed paths not skip dimensions limits both the paths and the homotopies in such a way that the sequence of strata traversed is invariant under directed homotopies.  However, for the same reason, constructions involving flag-like triangulations with the requirement that the edges be directed and faces carry a directed homotopy from the two-edge path to the one-edge path cannot be applied to $d_{ror}$ if there are strata of codimension greater than one.  For $d_{for}$ constructions with flag-like triangulations can be applied, but it is unclear at this writing what the appropriate class of categories to use for colorings would be.

In the next section we describe a generalized Dijkgraaf-Witten theory for stratified spaces in the one case where $d_{for}$ and $d_{ror}$ coincide:  spaces in which the stratification has only two non-empty levels:  
$\emptyset = X_{d-2} \subset X_{d-1} \subset X_d$.   Note that for any such stratified space, $X_{d-1}$ is necessarily a $d-1$-manifold and the stratificaiton is necessarily collared.  We will refer to such a directed space as an {\em oriented $d$-manifold with defect} and require that both $X_d$ and $X_{d-1}$ be compact.

\section{Counting Invariants and Generalized Dijkgraaf-Witten Theory}

For oriented manfolds with defect, the analogues of the parcels over partially ordered sets of \cite{LY} will be given by

\begin{definition}
 A {\em $\Gamma_2$-parcel} is a category  $\mathcal C$ equipped with a conservative functor $\gamma$, surjective on arrows, to the path category ${\mathcal P}(\Gamma_2)$ of the directed graph $\Gamma_2$ with vertices $\delta$ and $\beta$ (for ``defect'' and ''bulk'', respectively) and edges $\omega= (\delta, \beta)$ and $\iota = (\beta, \delta)$ (for ``outbound'' and ``inbound'', respectively). A $\Gamma_2$-parcel is {\em fiber-finite} if the inverse image of any arrow in the path category is a finite set of arrows in $\mathcal C$, and {\em regular} if it is injective on objects.
\end{definition}

As an aside, it appears that the appropriate level of generality for base categories in the notion of parcel is a gaunt category, that is, one in which all isomorphisms are identity arrows.  

The following proposition explains why $\Gamma_2$-parcels are are well adapted to studying oriented manifolds with defect:

\begin{proposition}
If $(X,d)$ is an oriented manifold with defect, and $x_i \in S_i$ is a choice of base points, one in each stratum of $X$, then the full subcategory $\uparrow \pi_1(X,d,\{x_i\})$ of $\uparrow \Pi_1(X,d)$ with the  base points $x_i$ as objects admits a canonical $\Gamma_2$-parcel structure, with the functor to ${\mathcal P}(\Gamma_2)$ given on objects by sending vertices in $d$-dimensional strata to $\beta$ and those in $d-1$-dimensional strata to $\delta$.
\end{proposition}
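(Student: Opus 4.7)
The plan is to construct the functor $\gamma$ on morphisms by using the orientation data to interpret a $d_{ror}$-representative path as a word in $\omega$ and $\iota$. On objects, as stated, $\gamma(x_i)=\beta$ when $\dim S_i=d$ and $\gamma(x_i)=\delta$ when $\dim S_i=d-1$. Given a $d$-path $p$, I invoke the decomposition guaranteed by the hypothesis $d_{for}=d_{ror}$: write $0=a_0<\cdots<a_n=1$ with each restriction $p|_{[a_{k-1},a_k]}$ lying in some $U_{i_k}\cap(S_{i_k}\cup S_{j_k})$ of adjacent dimensions and monotone in $\preceq_{U_{i_k},\pm}$. The collared multicollar structure together with the orientations forces each piece into exactly one of three types: it stays within a single stratum, it crosses from the defect to the $-$-side of a bicollar bulk (monotone in $\preceq_{U,-}$), or it crosses from the $+$-side of a bicollar bulk to the defect (monotone in $\preceq_{U,+}$). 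I assign these an identity, $\omega$, or $\iota$ respectively, concatenating to obtain a morphism $\gamma([p])$ in ${\mathcal P}(\Gamma_2)$.

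Well-definedness on reparametrization classes and on the choice of $d_{ror}$-decomposition is routine: any two decompositions admit a common refinement, and refinement either inserts identities (for a constant-stratum piece subdivided) or factors a transition as identity-transition-identity, each recomposing to the same word. The core difficulty is invariance under immediate $d$-homotopy $h:\uparrow\!{\bf I}^2\to X$. The key observation is that the orientations rigidly dictate the permissible direction of every defect crossing: on the $+$-side of a bicollar only monotone $\preceq_{U,+}$-paths from bulk to defect are admitted, while on the $-$-side only monotone $\preceq_{U,-}$-paths from defect to bulk are admitted. A clean way to encode this is to build a continuous $d$-map $c:X\to|\Gamma_2|$ sending the defect to $\delta$, the bulk far from the defect to $\beta$, and each bicollar linearly along the appropriate edge; the orientation constraints are exactly what make $c$ a $d$-map, and the induced functor $c_*:\uparrow\!\Pi_1(X,d)\to\uparrow\!\Pi_1(|\Gamma_2|)\cong{\mathcal P}(\Gamma_2)$ restricts on the base points to precisely the $\gamma$ defined above. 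Well-definedness on $d$-homotopy classes is then automatic from functoriality of $c_*$. The main obstacle is verifying this $d$-map property (equivalently, that the direct decomposition recipe yields the same word on each horizontal slice of a directed homotopy square), which reduces to the orientation constraints just described together with the authors' earlier remark that for $d_{ror}$ the sequence of strata traversed is an invariant of the $d$-homotopy class.

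Surjectivity on arrows follows from the multicollar structure: for each adjacent pair of base points $x_i\in X_{d-1}$ and $x_j\in X_d\setminus X_{d-1}$ there is an explicit $d_{ror}$-path from $x_i$ into the $-$-side of a local bicollar to $x_j$ realizing $\omega$, and symmetrically one from $x_j$ along the $+$-side back to $x_i$ realizing $\iota$; concatenating these atomic paths with paths inside single strata between base points realizes every word of ${\mathcal P}(\Gamma_2)$. For conservativity, note that the only isomorphisms of ${\mathcal P}(\Gamma_2)$ are identities, so I must show that any $[p]$ with $\gamma([p])$ an identity represents an isomorphism in $\uparrow\!\pi_1(X,d,\{x_i\})$. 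Such a $p$ has no defect--bulk transitions and hence lies entirely in the single stratum $S_i$ containing $x_i$. The induced $d$-structure on $S_i$ is chaotic, as the orientation-derived relations only compare distinct strata, so the restricted directed fundamental category at $x_i$ coincides with the classical fundamental group $\pi_1(S_i,x_i)$, whose elements are invertible; the homotopies realizing these inverses remain valid $d$-homotopies in $X$ through the inclusion $S_i\hookrightarrow X$, so $[p]$ is an isomorphism in $\uparrow\!\pi_1(X,d,\{x_i\})$.
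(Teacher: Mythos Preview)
Your approach differs from the paper's. The paper argues directly: for a directed path meeting the defect, the algebraic intersection number with each defect stratum equals the geometric count of crossings (all crossings have the same sign by the orientation convention), and this number is a homotopy invariant rel boundary; combined with the observation that a directed path can never touch the defect and return on the same side, this forces the ordered sequence of strata traversed to be preserved under directed homotopy. The functor to $\mathcal{P}(\Gamma_2)$ is then read off from that sequence.

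Your route through an auxiliary $d$-map $c:X\to|\Gamma_2|$ is conceptually attractive, but there is a genuine gap in how you specify it. With the \emph{standard} $d$-structure on the geometric realization $|\Gamma_2|$ (directed paths are those monotone along each edge), the map $c$ you describe is \emph{not} a $d$-map: a directed path in $X$ lying in the $+$-collar but moving within the bulk stratum \emph{away} from the defect---perfectly legal, since the preorder is chaotic on strata---is sent by $c$ to a path running backwards along the $\iota$-edge, which is not directed. What actually makes your construction work is to equip $|\Gamma_2|$ itself with the structure of an oriented $1$-manifold with defect (defect $\{\delta\}$, bulk the open arc, $\beta$ a chosen bulk point); with that $d$-structure one does have $\uparrow\!\pi_1(|\Gamma_2|,\{\beta,\delta\})\cong\mathcal{P}(\Gamma_2)$, and $c$ becomes a $d$-map for exactly the orientation reasons you cite. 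You should state this explicitly and verify both facts.

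Separately, your appeal to ``the authors' earlier remark that for $d_{ror}$ the sequence of strata traversed is an invariant'' is circular in this context: that remark appears in the paper as an unproved aside, and the present proposition is precisely where it is established in the two-level case. Once $c$ is correctly set up you do not need that remark at all---functoriality of $c_*$ already delivers invariance under $d$-homotopy---so simply drop the reference. Your treatments of surjectivity and conservativity are more explicit than the paper's and are correct as written.
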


\begin{proof}
First observe that any directed path from one of the chosen base points to another (possibly equal) traverses a sequence of strata.  This sequence of strata will determine the image under the structural functor for the parcel structure.   

 The key to this is the observation that a directed homotopies of paths rel boundary cannot neither push a path into the defect nor pull it off.   For paths passing through the defect, it suffices to observe that homotopies rel boundary of paths will preserve the intersection number of the path with any stratum of the defect, which by construction, for directed paths will always be non-negative, and thus positive if the path passes through the defect.  Paths in the bulk cannot be homotopic to paths passing though the defect, since the moment a homotopy moved the path into the defect, it would both enter and leave on the same side, and thus not be directed.  Likewise paths in the defect are not homotopic to paths that leave and reenter the defect, since again, these paths would leave and enter on the same side, and thus no be directed.

It thus follows that the sequence of strata traversed by a path is invariant under directed homotopy.  This, together with the fact that within a statum homotopy and directed homotopy are identical shows that any path between chosen basepoints, will, either lie entirely in a single statum, or up to directed homotopy will factor as a sequence of directed paths each of which begins at a base point, and ends at a base point of an adjacent stratum.  Mapping paths within a stratum of the bulk (resp. defect) to the identity map $\beta$ (resp. $\delta$) and mapping paths from the base point in the bulk (resp. defect) to the base point of an adjacent stratum in the defect (resp. bulk) to $\iota$ (resp. $\omega$) then determines a functor from $\uparrow \pi_1(X,d,\{x_i\})$ to ${\mathcal P}(\Gamma_2)$, which is plainly conservative and surjective on arrows.
\end{proof}

We call $\uparrow \pi_1(X,d,\{x_i\})$ equipped with this functor to ${\mathcal P}(\Gamma_2)$, {\em the fundamental parcel} of the oriented manifold with defect $(X,d)$ at base points $\{x_i\}$.

It is trivial then to define a counting invariant of oriented manifolds with defect:

Given a regular fiber-finite $\Gamma_2$-parcel $\mathcal C$, let ${\mathcal N}_{\mathcal C}(X,d)$ be the number of functors from $\uparrow \pi_1(X,d,\{x_i\})$ to $\mathcal C$ such that the triangle formed with the structural functors to ${\mathcal P}(\Gamma_2)$ commutes.  By abuse of notation, we denote the unique object of a regular $\Gamma_2$-parcel mapped to $\beta$ (resp. $\delta$) by the structure functo by $\beta$ (resp. $\delta$).

This number is independent of the choice of base points for the strata, since another choice of base points will result in an isomorphic fundamental parcel.

While this invariant is defined using only stratification and the oriented directed space structure, as with the untwisted versions of Dijkgraaf-Witten theory for source defects of \cite{DPY, LY}, it can be computed from a fine enough flag-like triangulation of $(X,d)$, and will thus admit a relative version as a topological quantum field theory with defects (cf. \cite{Y2}).  In this form, as with the theories in \cite{DPY, LY} it will allow for a version ``twisted'' by suitable analogues of the group cocycles in finite guage group Dijkgraaf-Witten theory.

To construct the counting invariant as a state-sum, we require that the triangulation of $(X,d)$ be flag-like with respect to the stratification, in the present context simply meaning that the strata are subcomplexes and that every simplex intersects $X_{d-1}$ in the empty set or in a face (of any codimension).  The usual expedient of orienting edges by an ordering of the vertices does not work in this setting, instead, order the vertices of each stratum, and orient the edges that lie entirely with a stratum from earlier vertex to later, while orienting the remaining edges (with exactly one vertex in $X_{d-1}$) so that traversed in the direction of the orientation, the edge is a directed path.  Notice that although there is no global ordering of the vertices, the orientations on the edges of a top-dimensional simplex will induce a total ordering of the vertices of the simplex, and a sequence of edges joining successive vertices in order from least to greatest, which we call `` the long path''.

We call a flag-like triangluation equipped with such orientations on its edges a {\em directed triangulation}.
It is easy to see that for any extended Pachner move in the sense of \cite{CY}, once any new vertex is inserted into the ordering of the vertices in the statum in which it lies, the new triangulation formed admits orientations of all new edges so that it is a directed triangulation.

Now observe that for any directed triangulation, any 2-dimensional face is of one of six types:  lying entirely in the bulk, lying entirely in the defect, with one edge in the defect and edges oriented from the opposite vertex in the bulk to its vertices, with one edge in the defect and edges oriented from its vertices to the opposite vertex in the bulk, with one edge in the bulk and edges oriented from its vertices to the opposite vertex in the defect, or with one edge in the bulk and edges oriented to its vertices from the opposite vertex in the defect.  In each case, two of the edges, regarded as directed paths, are composable and the third edge, as a directed path, is immediately directed homotopic to them. 

We can then make

\begin{definition}
Given a directed triangulation $\mathcal T$ of $(X,d_{ror} = d_{for})$, and a regular fiber-finite $\Gamma_2$-parcel $\mathcal C$, a {\em $\mathcal C$-coloring of $\mathcal T$} is an assignment to each edge of an arrow in $\mathcal C$ in which 
\begin{enumerate}
\item the arrow assigned to an edge in the bulk lies above the identity arrow of $\beta$
\item the arrow assigned to an edge in the defect lies above the identity arrow of $\delta$
\item the arrow assigned to an edge with initial vertex in the bulk and terminal vertex in the defect lies above
the edge $\iota$ (regarded as a generating arrow of the path category)
\item the arrow assigned to an edge with initial vertex in the defect and terminal vertex in the bulk lies above
the edge f$\omega$ (regarded as a generating arrow of the path category)
\item for every 2-dimensional face, the composition of the arrows assigned to the two composble edges is equal to the arrow assigned to the third edge.
\end{enumerate}
\end{definition}

Note that as a result of the last condition, the restriction of a coloring to the edges of the long path of a simplex determines the arrows assigned by the coloring to every edge of the simplex.

We then have

\begin{theorem}
If $\mathcal T$ is any directed triangulation of $(X,d)$, letting $v_\beta$ (resp. $v_\delta$) denote the number of vertices in the bulk (resp. defect), and $\sigma_\beta$ (resp. $\sigma_\delta$) denote the number of strata in the bulk (resp. defect), then ${\mathcal N}_{\mathcal C}(X,d)$ is equal to the number of $\mathcal C$-colorings of $\mathcal T$ divided by 

\[ (\# \gamma^{-1}(Id_\beta))^{v_\beta - \sigma_\beta}\cdot
(\# \gamma^{-1}(Id_\delta))^{v_\delta - \sigma_\delta}\]
\end{theorem}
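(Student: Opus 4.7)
The plan is to partition the set of $\mathcal{C}$-colorings of $\mathcal{T}$ according to which parcel functor $\mathcal{A} \to \mathcal{C}$ they induce, and to show that every fiber has cardinality $(\#\gamma^{-1}(Id_\beta))^{v_\beta-\sigma_\beta}\cdot(\#\gamma^{-1}(Id_\delta))^{v_\delta-\sigma_\delta}$. Summing over the $\mathcal{N}_{\mathcal{C}}(X,d)$ functors then yields the claimed identity. Throughout, I will regard a $\mathcal{C}$-coloring as a parcel functor $\mathcal{T}^* \to \mathcal{C}$ from the category $\mathcal{T}^*$ generated by the directed edges of $\mathcal{T}$ modulo the 2-face identifications (which is precisely the translation of the coloring conditions in the definition).

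First, I would associate to each coloring $c$ a parcel functor $F_c : \uparrow\!\pi_1(X,d,\{x_i\}) \to \mathcal{C}$. For any morphism $[\ell]$ from $x_i$ to $x_j$ in $\mathcal{A}$, pick a simplicial representative in $\mathcal{T}$ --- a directed edge-path from $x_i$ to $x_j$ --- and take $F_c([\ell])$ to be the composition of the $c$-labels along it. The basepoint vertices of $\mathcal{T}$ lie in the correct strata, so compatibility with $\gamma$ is automatic. Well-definedness, i.e. independence of the chosen edge-path representative, is the topological crux: in a sufficiently refined flag-like directed triangulation, any directed homotopy of edge-paths factors as a finite sequence of 2-face identifications, each of which $c$ respects by definition. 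I would establish this as a directed analogue of simplicial approximation, using iterated extended Pachner moves in the sense of \cite{CY} to first refine the triangulation enough to carry the given homotopy and then decompose it into elementary steps.

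To count the fiber of $c \mapsto F_c$ over a fixed $F$, I would choose, for each stratum $S$, a spanning tree $\mathcal{T}_S$ of the subgraph of the $1$-skeleton of $\mathcal{T}$ consisting of edges within $S$, arranged so that every non-root vertex is reachable from $x_S$ along a directed edge-path in $\mathcal{T}_S$; such an arborescent tree exists by breadth-first search using the vertex ordering of $S$. Each such tree has $|V(\mathcal{T})\cap S| - 1$ edges, and summing over bulk, resp.\ defect, strata gives $v_\beta - \sigma_\beta$, resp.\ $v_\delta - \sigma_\delta$, tree edges. The claim is that the fiber over $F$ is in bijection with arbitrary assignments of arrows in $\gamma^{-1}(Id_\beta)$, resp.\ $\gamma^{-1}(Id_\delta)$, to bulk, resp.\ defect, tree edges: given tree data and $F$, the color of any remaining edge $e$ is forced by iteratively applying 2-face relations whose other two edges are already colored, with the $F$-value of the completing loop or cross-stratum morphism in $\mathcal{A}$ dictating each forced choice; conversely, tree-edge values of an arbitrary coloring are read off directly. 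The main obstacle is to show that this elimination is well-defined, i.e.\ that the forced values are consistent with all 2-face relations regardless of the order of elimination; this reduces once more to the directed simplicial approximation principle needed for $F_c$'s well-definedness, since any two elimination orders differ by a sequence of moves realized by 2-face identifications lifted from directed homotopies in $\mathcal{A}$. Once these pieces are in place, the stated equality is the partition identity $|\{\mathcal{C}\text{-colorings}\}| = \sum_F |\text{fiber}(F)|$.
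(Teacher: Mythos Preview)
Your approach is essentially the paper's: both choose a spanning tree in each stratum and establish a bijection between $\mathcal{C}$-colorings of $\mathcal{T}$ and pairs consisting of a coloring of the spanning forest together with a parcel map $\uparrow\!\pi_1(X,d,\{x_i\})\to\mathcal{C}$, from which the normalization factor drops out of the tree-edge count. The paper states the bijection directly rather than as fibers of $c\mapsto F_c$, and gives a closed formula for the forced color on each non-tree edge (apply $F$ to the path obtained by pre- and post-composing the edge with forest paths to the basepoints, then cancel the forest labels) instead of your iterative 2-face elimination, but the content is the same and the paper glosses over the same simplicial-approximation issues you flag.

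One small caveat: your requirement that each $\mathcal{T}_S$ be an arborescence rooted at $x_S$ with respect to the edge orientations is both unnecessary and not obviously satisfiable, since within a stratum the edge orientations come from an arbitrary total order on the vertices (e.g.\ a path with vertices ordered $1,3,2$ has no directed path from $1$ to $2$). The paper avoids this by observing that the $d$-structure is chaotic on each stratum, so every edge inside a stratum is a directed path in either direction; hence any spanning tree works and the forest paths to basepoints are automatically directed.
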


\begin{proof}
For each stratum, choose a vertex of the triangulation lying in the stratum to be a base point, and a spanning tree for the graph formed by the edges lying in the stratum.  We claim there is a bijection between $\mathcal C$-colorings of $\mathcal T$ and a pair of a $\mathcal C$-coloring of the edges the spanning forest and a map of $\Gamma_2$-parcels from the fundamental parcel with the chosen base points to $\mathcal C$.  Plainly establishing this claim suffices to establish the result.

Clearly, a $\mathcal C$-coloring of $\mathcal T$ restricts to a $\mathcal C$-coloring of the spanning forest, but it also determines a map of $\Gamma_2$-parcels from the fundamental parcel with the chosen base points to $\mathcal C$, since any path is directed homotopic to one lying in the 1-skeleton, and, moreover, all equations between elements of the fundamental parcel named by paths in the 1-skeleton are induced by the directed homotopies across 2-cells.

Conversely, given a $\mathcal C$-coloring of the spanning forest, and a map of the fundamental parcel to 
$\mathcal C$, there is a unique $\mathcal C$-coloring of $\mathcal T$, determined as follows:  Consider an edge not in the spanning forest.  Its endpoints lie in the spanning forest, each on some stratum.  Since edges within strata are directed paths when traversed in either direction, the edge, as a directed path, determines a unique directed path from the basepoint of the stratum in which its initial vertex lies to the base point in which its terminal vertex lies by pre- and post- composing with the directed paths along the edges of the spanning forest.  This path has an image in $\mathcal C$ under the parcel map.  Color the edge with the arrow of $\mathcal C$ obtained from the image in $\mathcal C$ of this path by pre- and post- composing with the inverses of the arrows obtained by composing the
edge lables (or their inverses as appropriate) along the paths in the forest.

It is easy to see that these two constructions are inverse to each other, and thus establish the desired bijection.

\end{proof}

Having shown that the invariants counting maps from the fundamental parcel to a given regular fiber-finite $\Gamma_2$-parcel arise from a state-sum on triangulations, it follows from the general constructions of \cite{Y2} that there is a topological quantum field theory with defects for which these invariants are the vacuum to vacuum expectations.

The construction just give, is, of course, an adaptation to the presence of oriented codimension 1 defects with the (full, or equivalently in this circumstance, restricted) oriented directed space structure of the usual construction of untwisted finite-gauge group Dijkgraaf-Witten theory.  As for source defects in \cite{DPY, LY} it is possible to identify the appropriate analogue of a group cocycle on a $\Gamma_2$-parcel to provide the twisted theory.

The coefficients for the twisted theory will be an assignment to each colored top-dimensional simplex to an invertible scalar (usually taken as a complex number, though theories with coefficients in another field or even commutative ring may be considered), dependent upon the arrows with which its edges are colored and the relationship of the orientation of the simplex induced by the order of traversing the edges to the orientation of the bulk.  This, of course, means assigning coefficients to possible labelings of the long paths of simplexes.  As in other Dijkgraaf-Witten type state-sum constructions, the coefficient assigned when a simplex oriented by the long path is opposite that of the bulk will be the multiplicative inverse of the coefficient assigned to a simplex with the same sequence of labels on its long path, but with orientation agreeing with the bulk.

As usual, we need conditions on the coefficients $\alpha(x_1,\ldots x_d)$ so that the sum over colorings of their product over top dimensional simplexes (with the factor inverted when the simplex has the reverse orientation) will be invariant under Pachner moves, and the extended Pachner moves of \cite{CY}.  Letting $\Lambda({\mathcal T}, {\mathcal C})$ denote the set of $\mathcal C$-colorings of a flag-like triangulation $\mathcal T$, $\lambda(\sigma)$ denote the coloring of the long path of a simplex $\sigma$ by $\lambda \in \Lambda({\mathcal T}, {\mathcal C})$, and $\epsilon(\sigma)$ the orientation sign of $\sigma$, once suitable cocycle-like conditions have been required of $\alpha$, the quantity
\[  \# \gamma^{-1}(Id_\beta)^{\sigma_\beta - v_\beta}\cdot
(\# \gamma^{-1}(Id_\delta)^{\sigma_\delta - v_\delta} \cdot \sum_{\lambda \in \Lambda({\mathcal T}, {\mathcal C})} \prod_{\sigma \in {\mathcal T}_d} \alpha(\lambda(\sigma))^{\epsilon(\sigma)} \;\;\; (*)\]

\noindent will be an invariant of the pair of oriented manifolds $X_{d-1} \subset X_d$, arising from a TQFT with defects (cf. \cite{Y2}).

We will denote the arrow coloring the $i^{th}$ edge of a sequence of edges representing a factorization of a directed path lying in the 1-skeleton, by $a_i$ if the edge lies entriely in the bulk, $b_i$ if it lies entirely in the defect, $c_i$ if it has an initial vertex in the defect but otherwise lies in the bulk, and $d_i$ if it has a terminal vertex in the defect, but otherwise lies in the bulk.

Thus the $a_i$ will all lie above the identity arrow on $\beta$, the $b_i$ will all lie above the identity arrow on $\gamma$, the $c_i$ will lie above the generating arrow coming from the edge $(\gamma, \beta)$ and the $d_i$ above the generating arrow coming from the edge $(\beta, \gamma)$.

It is easy to see that top dimensional simplexes will have long paths of one of the forms

\begin{itemize}
\item $(a_1, a_2, \ldots, a_d),$
\item $(b_1, \ldots b_{k-1}, c_k, a_{k+1}, \ldots a_d)$ or
\item $(a_1, \ldots a_{k-1}, d_k, b_{k+1}, \ldots b_d),$
\end{itemize} 

\noindent in the latter two cases with $k$ ranging from 1 to $d$ (there being either no $a$'s or no $b$'s in the instances where $k=1$ or $k=d$).

Consider then a $1$ to $d+1$ Pachner move (that is a an Alexander subdivision of a top-dimensional simplex).  The new vertex introduced lies in the bulk, and as usual, the requirement that every coloring of the finer triangulation agreed with the old triangulation on edges common to both give the same value as the old triangulation, gives a cocycle condition, solved for the factor corresponding to the simplex which had beens subdivided:

For each of the three forms of top dimensional simplexes giving

\begin{itemize}
\item $\alpha(a_1, a_2, \ldots, a_d) = \alpha(a_0a_1,a_2 \ldots a_d)\alpha(a_0,a_1a_2,\ldots a_d)^{-1} \ldots$

$ \alpha(a_0,a_1,\ldots a_{d-1}a_d)^{(-1)^{d-1}}\alpha(a_0,a_1,\ldots a_{d-1})^{(-1)^d},$

\item $\alpha(b_1, \ldots b_{k-1}, c_k, a_{k+1}, \ldots a_d) = \alpha(b_1, \ldots b_{k-1}, c_k, a_{k+1}, \ldots a_da_{d+1}) \ldots$ 

$\alpha(b_1, \ldots b_{k-1}, c_k, a_{k+1}a_{k+2}, \ldots a_{d+1})^{{-1}^{d-k-1}}$ 

$\alpha(b_1, \ldots b_{k-1}, c_ka_{k+1}, \ldots a_{d+1})^{(-1)^{d-k}}$ 

$\alpha(b_1, \ldots b_{k-2}, b_{k-1}c_k, a_{k+1}, \ldots a_{d+1})^{(-1)^{d-k+1}}$

$\alpha(b_1, \ldots b_{k-2}b_{k-1}, c_k, a_{k+1}, \ldots a_{d+1})^{(-1)^{d-k+2}} \ldots$

$\alpha(b_1b_2, \ldots b_{k-1}, c_k, a_{k+1}, \ldots a_{d+1})^{(-1)^{d-1}} $

$\alpha(b_2, \ldots b_{k-1}, c_k, a_{k+1}, \ldots a_{d+1})^{(-1)^{d}} $ and

\item $\alpha(a_1, \ldots a_{k-1}, d_k, b_{k+1}, \ldots b_d) = \alpha(a_0a_1, \ldots a_{k-1}, d_k, b_{k+1}, \ldots b_d)\ldots$

$ \alpha(a_0, a_1, \ldots a_{k-2}a_{k-1}, d_k, b_{k+1}, \ldots b_d)^{(-1)^{k-2}}$

$\alpha(a_0, \ldots a_{k-2}, a_{k-1}d_k, b_{k+1}, \ldots b_d)^{(-1)^{k-1}}$

$\alpha(a_0, \ldots a_{k-1}, d_kb_{k+1}, b_{k+2} \ldots b_d)^{(-1)^k} \ldots$

$ \alpha(a_0, \ldots a_{k-1}, d_k, b_{k+1}, \ldots b_{d-1}b_d)^{(-1)^{d-1}}$ 

$\alpha(a_0, \ldots a_{k-1}, d_k, b_{k+1}, \ldots b_{d-1})^{(-1)^d},$
\end{itemize}

Observe that the extra vertex forming the subdivision was added first in the first and third cases and last in the second case.  In each instance, solving to move all the terms to one side gives the obvious generalization of a group $d$-cocycle to composable $d$-tuples of arrows in the parcel with the restriction that the arrows lie above generators or identity arrow in the path category on $\Gamma_2$, and at most one lies above a non-identity arrow.

It is easy to see that solving to put $k$ factors  on one side with the remaining $d+2-k$ on the other gives the condition needed to ensure that for any particular coloring the value before a $k$ to $d+2-k$ Pachner move is the same as that for the coloring induced on the final state after the move, and thus to ensure invariance of the state-sum (*) under Pachner moves in the bulk.

It then remains to determine the conditions to ensure invariance under extended Pachner moves which change the triangulation of the defect.  In the present setting, these moves are simply the supension of a $d-1$ dimensional Pachner move in the defect.  Each $d-1$ simplex of the defect involved in such a move is the shared face of two $d$-simplexes, one with the edges not lying in the defect oriented from the bulk to the defect, the other with edges not lying in the defect oriented from the defect to the bulk.  The long paths of these being of the form $(d_1, b_2, \ldots b_d)$ and $(b_1,\ldots b_{d-1} c_d)$, respectively.

In general a Pachner move in the defect replaces one cell of a decomposition of the boundary of a $d$-simplex into two triangulated cells with the other cell of the decomposition.  If the long path of this $d$-simplex is labled $b_1,\ldots b_d$, then there will be a long path through the suspension with a coloring of the form $c_0, b_1, \dots b_d, d_{d+1}$, which determines the coloring of all of the edges in the befor and after states of the extended Pachner move.  A $k$ to $d+1-k$ Pachner move in the defect induces a $2k$ to $2d + 2 -2k$ extended Pachner move on the triangulation of the pair.  It is fairly easy to verify that the equations required for invariance under extended Pachner moves will all result from solving the condition
\medskip

$\alpha(c_0b_1, b_2\ldots b_d)\alpha(b_2, b_3, \dots b_d, d_{d+1})\cdot $

$\alpha(c_0, b_1b_2, b_3\ldots b_d)^{-1}\alpha(b_1b_2, b_3, \dots b_d, d_{d+1})^{-1}\cdot \ldots $

$\alpha(c_0, b_1, b_2,\ldots b_\ell b_{\ell+1}\ldots b_d)^{(-1)^\ell}\alpha(b_1, b_2,\ldots b_\ell b_{\ell+1}\ldots b_d, d_{d+1})^{(-1)^{\ell}}\cdot \ldots $

$\alpha(c_0, b_1, b_2,\ldots b_{d-1}b_d)^{(-1)^{d-1}}\alpha(b_1, b_2,\ldots b_{d-1}b_d, d_{d+1})^{(-1)^{d-1}}\cdot $

$\alpha(c_0, b_1, b_2,\ldots  b_{d-1})^{(-1)^d}\alpha(b_1, b_2,\ldots b_{d-1}, b_dd_{d+1})^{(-1)^{d}} = 1$
\medskip

We thus can define the appropriate coefficients for an analogue of twisted finite gauge-group Dijkgraaf-Witten theory for oriented manifolds with defect:

\begin{definition}
A {\em partial $d$-cocycle} on a $\Gamma_2$-parcel $\mathcal C$ is a function $\alpha$ from the set of composable $d$-tuples of arrows in $\mathcal C$ all lying above either and identity arrow or a generating edge of $\mathcal P(\Gamma_2)$, with at most one arrow lying above a non-identity arrow to the group of units of a field $K$, such that 

\begin{enumerate}
\item For every composable $d+1$-tuple with at most one edge lying above a non-identity arrow, 
$x_0,\ldots x_d$, 
\smallskip

$\delta(\alpha)(x_0,\ldots x_d) = \alpha(x_1,\ldots x_d) \alpha(x_0x_1, x_2, \ldots x_d)^{-1} \ldots$

$\ldots \alpha(x_0,\ldots x_{d-1}x_d)^{(-1)^d} \alpha(x_0, \ldots x_{d-1}) = 1$
\smallskip

\item For every composable $d+2$-tuple of the form $c_0, b_1, \dots b_d, d_{d+1}$, the condition of the previous paragraph holds.
\end{enumerate}
\end{definition}

We thus have 

\begin{theorem}  If $\mathcal C$ is a regular fiber-finite $\Gamma_2$-parcel and $\alpha$ is a partial $d$-cocycle on it, for any flag-like triangulation of an oriented manifold with defect $\mathcal T$ the quantity
\[  \# \gamma^{-1}(Id_\beta)^{\sigma_\beta - v_\beta}\cdot
(\# \gamma^{-1}(Id_\delta)^{\sigma_\delta - v_\delta} \cdot \sum_{\lambda \in \Lambda({\mathcal T}, {\mathcal C})} \prod_{\sigma \in {\mathcal T}_d} \alpha(\lambda(\sigma))^{\epsilon(\sigma)} \;\;\; (*)\]

\noindent is independent of the triangulation, and thus an invariant of the oriented manifold with defect.
\end{theorem}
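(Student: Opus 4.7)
The plan is to reduce triangulation-independence to invariance under a finite list of local moves and then verify each move using the partial cocycle conditions. First, I would invoke the extension of Pachner's theorem to flag-like triangulations of stratified spaces from \cite{CY}: any two flag-like directed triangulations of $(X_{d-1}\subset X_d)$ are connected by a finite sequence of (a) ordinary $k$-to-$(d+2-k)$ Pachner moves supported entirely in the bulk, and (b) extended Pachner moves obtained by suspending $j$-to-$(d+1-j)$ Pachner moves in the $(d-1)$-dimensional defect. Thus it suffices to check that the quantity (*) is unchanged by each such move, and that re-ordering vertices within a stratum to accommodate a newly inserted vertex does not affect the sum; the latter follows from the observation that the arrow labelling an edge depends on the direction of the edge as a directed path, not on the specific rank of its endpoints within the stratum order.

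For bulk Pachner moves I would check invariance for each of the three possible shapes of long path on the affected top-dimensional simplices: pure bulk $(a_1,\ldots,a_d)$, one factor of form $c_k$, and one of form $d_k$. In each case, condition (1) of the partial cocycle definition — which is exactly the identity derived in the discussion preceding the theorem by solving the $1$-to-$(d+1)$ Alexander subdivision for the removed simplex — yields, after rearrangement, the corresponding $k$-to-$(d+2-k)$ identity needed to match the products of cocycle weights on the two sides of the move for each matched pair of colorings. To obtain full invariance of (*) one must also account for the change in the number of colorings: when a bulk vertex is added, the set of new colorings restricting to a fixed old coloring is in bijection with the fiber $\gamma^{-1}(Id_\beta)$, by the same spanning-forest bijection used in the proof of the preceding theorem. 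The prefactor $\#\gamma^{-1}(Id_\beta)^{\sigma_\beta - v_\beta}$ is calibrated to cancel this multiplicity exactly, since $\sigma_\beta$ is unchanged and $v_\beta$ tracks the vertex count.

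For extended Pachner moves the argument is parallel but uses condition (2). The key geometric observation is that each $(d-1)$-simplex of the defect participating in such a move is the shared face of exactly two top-dimensional simplices of the ambient triangulation — one with long path of the form $(c_0,b_1,\ldots,b_{d-1})$ and one of the form $(b_1,\ldots,b_{d-1},d_d)$ — and pairing these across the suspension produces long-path data of shape $(c_0,b_1,\ldots,b_d,d_{d+1})$, which is precisely the $(d+2)$-tuple appearing in condition (2). The $\sigma_\delta - v_\delta$ prefactor handles the defect-vertex multiplicity exactly as in the bulk case.

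The principal obstacle, I expect, is the careful bookkeeping of signs and index shifts in condition (1) when applied to long paths containing a $c_k$ or $d_k$ in arbitrary middle position, since these must interact consistently with the location (first, middle, or last in the induced vertex order) at which the Alexander subdivision inserts its new vertex, and with the orientation sign $\epsilon(\sigma)$ of each simplex. Fortunately, the discussion preceding the theorem statement has essentially carried out this bookkeeping by writing out each solved cocycle identity explicitly; what remains is to organize the finitely many positional cases into a systematic case-check and to verify that the general $k$-to-$(d+2-k)$ equation follows by moving $k$ factors to the opposite side of the appropriate solved identity, as is standard in Dijkgraaf-Witten type state-sum arguments.
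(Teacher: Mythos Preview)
Your proposal is correct and follows essentially the same approach as the paper: the discussion preceding the theorem derives conditions (1) and (2) of the partial cocycle definition precisely as the equations needed for invariance under bulk Pachner moves and extended Pachner moves respectively, so once $\alpha$ is assumed to be a partial $d$-cocycle the theorem follows by the move-connectivity result of \cite{CY}. Your treatment of the vertex-count prefactor and the vertex-reordering issue is in fact slightly more explicit than what the paper records, but the overall architecture is identical.
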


Over the next few pages, we illustrate with figures the derivation of the relation from the extended Pachner moves in the case $d = 3$.
The general case is entirely similar.
\smallskip

\begin{tabular}{ll}

$\displaystyle 
\begin{tikzpicture}
\fill[cyan,cyan, opacity=.7] (0,0)--(1.9,-0.5)--(3,0.5)--(0,0);
	\node[circle, fill, inner sep=.9pt, outer sep=0pt] (A) at (0,0){};
	\node[circle, fill, inner sep=.9pt, outer sep=0pt] (B) at (1.9,-0.5){};
	\node[circle, fill, inner sep=.9pt, outer sep=0pt] (C) at (3,0.5){};
	\node[circle, fill, inner sep=.9pt, outer sep=0pt] (D) at (1.5,1.8){};
	\node[circle, fill, inner sep=.9pt, outer sep=0pt] (E) at (1.5,-1.8){};
		\begin{scope}
			\draw [midarrow={>}] (A)--(B) node[font=\tiny, midway, below]{$ b_1 $};
			\draw [midarrow={>}] (B)--(C) node[font=\tiny, midway, left]{$ b_2 $};
			\draw [dashed,midarrow={>}] (A)--(C) node[font=\tiny, midway, below]{$ b_1 b_2 $\,\,\,};	
			\draw [midarrow={>}] (D)--(A) node[font=\tiny, midway, left]{$ c_0 $};
			\draw [midarrow={>}] (D)--(B) node[font=\tiny, midway, right]{$ c_0 b_1 $};
			\draw [midarrow={>}] (D)--(C) node[font=\tiny, midway, right]{$ c_0 b_1 b_2$};
			\draw [midarrow={>}] (A)--(E) node[font=\tiny, midway, left]{$ b_1 b_2 b_3 d_4 $};
			\draw [midarrow={>}] (B)--(E) node[font=\tiny, midway, left] {$ b_2 b_3 d_4 $};
			\draw [midarrow={>}] (C)--(E) node[font=\tiny, midway, right]{$ b_3 d_4 $};
			\fill[blue, opacity=.7] (A) circle (3pt);
			\fill[blue, opacity=.7] (B) circle (3pt);
			\fill[blue, opacity=.7] (C) circle (3pt);
			\fill[green, opacity=.7] (D) circle (3pt);
			\fill[green, opacity=.7] (E) circle (3pt);
			\draw[thick,blue,blue] (A)--(B);
			\draw[thick,blue,blue] (B)--(C);
			\draw[dashed,thick,blue,blue] (A)--(C);			
		\end{scope}
\end{tikzpicture}$

&
 
$ \left\{ 
 \begin{array}{ll}

\begin{tikzpicture}
\fill[cyan,cyan, opacity=.7] (0,0)--(1.9,-0.5)--(3,0.5)--(0,0);
	\node[circle, fill, inner sep=.9pt, outer sep=0pt] (A) at (0,0){};
	\node[circle, fill, inner sep=.9pt, outer sep=0pt] (B) at (1.9,-0.5){};
	\node[circle, fill, inner sep=.9pt, outer sep=0pt] (C) at (3,0.5){};
	\node[circle, fill, inner sep=.9pt, outer sep=0pt] (D) at (1.5,1.8){};
		\begin{scope}
			\draw [midarrow={>}] (A)--(B) node[font=\tiny, midway, below]{$ b_1 $};
			\draw [midarrow={>}] (B)--(C) node[font=\tiny, midway, left]{$ b_2 $};
			\draw [dashed,midarrow={>}] (A)--(C) node[font=\tiny, midway, below]{$ b_1 b_2 $\,\,\,};	
			\draw [midarrow={>}] (D)--(A) node[font=\tiny, midway, left]{$ c_0 $};
			\draw [midarrow={>}] (D)--(B) node[font=\tiny, midway, right]{$ c_0 b_1 $};
			\draw [midarrow={>}] (D)--(C) node[font=\tiny, midway, right]{$ c_0 b_1 b_2$};
			\fill[blue, opacity=.7] (A) circle (3pt);
			\fill[blue, opacity=.7] (B) circle (3pt);
			\fill[blue, opacity=.7] (C) circle (3pt);
			\fill[green, opacity=.7] (D) circle (3pt);
			\draw[thick,blue,blue] (A)--(B);
			\draw[thick,blue,blue] (B)--(C);
			\draw[dashed,thick,blue,blue] (A)--(C);			
		\end{scope}
\end{tikzpicture}

& \alpha( c_0 , b_1 , b_2 ) ^{-1} 

\\
\\

\begin{tikzpicture}
\fill[cyan,cyan, opacity=.7] (0,0)--(1.9,-0.5)--(3,0.5)--(0,0);
	\node[circle, fill, inner sep=.9pt, outer sep=0pt] (A) at (0,0){};
	\node[circle, fill, inner sep=.9pt, outer sep=0pt] (B) at (1.9,-0.5){};
	\node[circle, fill, inner sep=.9pt, outer sep=0pt] (C) at (3,0.5){};
	\node[circle, fill, inner sep=.9pt, outer sep=0pt] (E) at (1.5,-1.8){};
		\begin{scope}
			\draw [midarrow={>}] (A)--(B) node[font=\tiny, midway, below]{$ b_1 $};
			\draw [midarrow={>}] (B)--(C) node[font=\tiny, midway, left]{$ b_2 $};
			\draw [midarrow={>}] (A)--(C) node[font=\tiny, midway, above]{$ b_1 b_2 $\,\,\,};	
			\draw [midarrow={>}] (A)--(E) node[font=\tiny, midway, left]{$ b_1 b_2 b_3 d_4 $};
			\draw [midarrow={>}] (B)--(E) node[font=\tiny, midway, left] {$ b_2 b_3 d_4 $};
			\draw [midarrow={>}] (C)--(E) node[font=\tiny, midway, right]{$ b_3 d_4 $};
			\fill[blue, opacity=.7] (A) circle (3pt);
			\fill[blue, opacity=.7] (B) circle (3pt);
			\fill[blue, opacity=.7] (C) circle (3pt);
			\fill[green, opacity=.7] (E) circle (3pt);
			\draw[thick,blue,blue] (A)--(B);
			\draw[thick,blue,blue] (B)--(C);
			\draw[thick,blue,blue] (A)--(C);			
		\end{scope}
\end{tikzpicture}

& \displaystyle \alpha( b_1 , b_2 , b_3 d_4 )^{-1} 

\\ 
\end{array} \right.$

\\
\\

\end{tabular}

\begin{tabular}{ll}

$\displaystyle 
\begin{tikzpicture}
\fill[cyan,cyan, opacity=.7] (0,0)--(1.9,-0.5)--(3,0.5)--(0,0);
	\node[circle, fill, inner sep=.9pt, outer sep=0pt] (A) at (0,0){};
	\node[circle, fill, inner sep=.9pt, outer sep=0pt] (B) at (1.9,-0.5){};
	\node[circle, fill, inner sep=.9pt, outer sep=0pt] (C) at (3,0.5){};
	\node[circle, fill, inner sep=.9pt, outer sep=0pt] (D) at (1.5,1.8){};
	\node[circle, fill, inner sep=.9pt, outer sep=0pt] (E) at (1.5,-1.8){};
	\node[circle, fill, inner sep=.9pt, outer sep=0pt] (F) at (1.5,-0.1){};
		\begin{scope}
			\draw [midarrow={>}] (A)--(B) node[font=\tiny, midway, below]{$ b_1 $};
			\draw [midarrow={>}] (B)--(C) node[font=\tiny, midway, below]{\!$ b_2 $};
			\draw [dashed,midarrow={>}] (A)--(C) node[font=\tiny, midway, above]{$ b_1 b_2 $\,\,\,\,\,\,\,\,\,\,\,\,\,};	
			\draw [midarrow={>}] (D)--(A) node[font=\tiny, midway, left]{$ c_0 $};
			\draw [midarrow={>}] (D)--(B) node[font=\tiny, midway, right]{$ c_0 b_1 $};
			\draw [midarrow={>}] (D)--(C) node[font=\tiny, midway, right]{$ c_0 b_1 b_2 $};
			\draw [midarrow={>}] (A)--(E) node[font=\tiny, midway, left]{$ b_1 b_2 b_3 d_4 $};
			\draw [midarrow={>}] (B)--(E) node[font=\tiny, midway, right] {$ b_2 b_3 d_4 $};
			\draw [midarrow={>}] (C)--(E) node[font=\tiny, midway, right]{$ b_3 d_4 $};
			\draw [dashed,thick, midarrow={>}] (1.5,1.8) -- (1.5,-0.1) node[font=\tiny, midway, right]{$ c_0 b_1 b_2 b_3 $};
			\draw [dashed,thick, midarrow={>}] (1.5,-0.25) -- (1.5,-1.8) node[font=\tiny, midway, left]{$ d_4 $};
			\fill[blue, opacity=.7] (A) circle (3pt);
			\fill[blue, opacity=.7] (B) circle (3pt);
			\fill[blue, opacity=.7] (C) circle (3pt);
			\fill[green, opacity=.7] (D) circle (3pt);
			\fill[green, opacity=.7] (E) circle (3pt);
			\draw[thick,blue,blue] (A)--(B);
			\draw[thick,blue,blue] (B)--(C);
			\draw[dashed,thick,blue,blue] (A)--(C);	
			\draw [thick, midarrow={>}] (A)--(F) node[font=\tiny, midway, above]{$ b_1 b_2 b_3 $};
			\draw [thick, midarrow={>}] (B)--(F) node[font=\tiny, midway, left]{$ b_2 b_3 $};
			\draw [thick, midarrow={>}] (C)--(F) node[font=\tiny, midway, below]{\!\!\!\!\!$ b_3 $};		
		\end{scope}
\end{tikzpicture}$

&
 
$ \left\{ 
 \begin{array}{ll}

\begin{tikzpicture}
\fill[cyan,cyan, opacity=.7] (0,0)--(1.9,-0.5)--(1.5,0.1)--(0,0);
	\node[circle, fill, inner sep=.9pt, outer sep=0pt] (A) at (0,0){};
	\node[circle, fill, inner sep=.9pt, outer sep=0pt] (B) at (1.9,-0.5){};
	\node[circle, fill, inner sep=.9pt, outer sep=0pt] (D) at (1.5,1.8){};
	\node[circle, fill, inner sep=.9pt, outer sep=0pt] (F) at (1.5,0.1){};
		\begin{scope}
			\draw [midarrow={>}] (A)--(B) node[font=\tiny, midway, below]{$ b_1 $};
			\draw [midarrow={>}] (D)--(A) node[font=\tiny, midway, left]{$ c_0 $};
			\draw [midarrow={>}] (D)--(B) node[font=\tiny, midway, right]{$ c_0 b_1 $};
			\draw [dashed,thick, midarrow={>}] (1.5,1.8) -- (1.5,0.1) node[font=\tiny, midway, right]{\,\,$ c_0 b_1 b_2 b_3 $};
			\fill[blue, opacity=.7] (A) circle (3pt);
			\fill[blue, opacity=.7] (B) circle (3pt);
			\fill[blue, opacity=.7] (F) circle (3pt);
			\fill[green, opacity=.7] (D) circle (3pt);
			\draw[thick,blue,blue] (A)--(B);
			\draw [dashed,thick, midarrow={>}] (A)--(F) node[font=\tiny, midway, above]{$ b_1 b_2 b_3 $};
			\draw [dashed,thick, midarrow={>}] (B)--(F) node[font=\tiny, midway, right]{\,\,$ b_2 b_3 $};
			\draw[dashed,thick,blue,blue] (A)--(F);
			\draw[dashed,thick,blue,blue] (F)--(B);
		\end{scope}
\end{tikzpicture}

& \alpha( c_0 , b_1 , b_2 b_3 )^{-1} 

\\
\\

\begin{tikzpicture}
\fill[cyan,cyan, opacity=.7] (0,0)--(1.5,-0.1)--(3,0.5)--(0,0);
	\node[circle, fill, inner sep=.9pt, outer sep=0pt] (A) at (0,0){};
	\node[circle, fill, inner sep=.9pt, outer sep=0pt] (C) at (3,0.5){};
	\node[circle, fill, inner sep=.9pt, outer sep=0pt] (D) at (1.5,1.8){};
	\node[circle, fill, inner sep=.9pt, outer sep=0pt] (F) at (1.5,-0.1){};
		\begin{scope}
			\draw [dashed,midarrow={>}] (A)--(C) node[font=\tiny, midway, above]{$ b_1 b_2 $\,\,\,\,\,\,\,\,\,\,\,\,\,};	
			\draw [midarrow={>}] (D)--(A) node[font=\tiny, midway, left]{$ c_0 $};
			\draw [midarrow={>}] (D)--(C) node[font=\tiny, midway, right]{$ c_0 b_1 b_2 $};
			\draw [thick, midarrow={>}] (1.5,1.8) -- (1.5,-0.1) node[font=\tiny, midway, right]{\!\!$ c_0 b_1 b_2 b_3 $};
			\fill[blue, opacity=.7] (A) circle (3pt);
			\fill[blue, opacity=.7] (C) circle (3pt);
			\fill[blue, opacity=.7] (F) circle (3pt);
			\fill[green, opacity=.7] (D) circle (3pt);
			\draw[dashed,thick,blue,blue] (A)--(C);	
			\draw [thick, midarrow={>}] (A)--(F) node[font=\tiny, midway, below]{$ b_1 b_2 b_3 $};
			\draw [thick, midarrow={>}] (C)--(F) node[font=\tiny, midway, below]{$ b_3 $};
			\draw[thick,blue,blue] (F)--(A);
			\draw[thick,blue,blue] (F)--(C);		
		\end{scope}
\end{tikzpicture}

& \displaystyle \alpha ( c_0 , b_1 b_2 , b_3 )

\\ 
\\

\begin{tikzpicture}
\fill[cyan,cyan, opacity=.7] (1.5,-0.1)--(1.9,-0.5)--(3,0.5)--(1.5,-0.1);
	\node[circle, fill, inner sep=.9pt, outer sep=0pt] (B) at (1.9,-0.5){};
	\node[circle, fill, inner sep=.9pt, outer sep=0pt] (C) at (3,0.5){};
	\node[circle, fill, inner sep=.9pt, outer sep=0pt] (D) at (1.5,1.8){};
	\node[circle, fill, inner sep=.9pt, outer sep=0pt] (F) at (1.5,-0.1){};
		\begin{scope}
			\draw [midarrow={>}] (B)--(C) node[font=\tiny, midway, below]{\!$ b_2 $};
			\draw [midarrow={>}] (D)--(B) node[font=\tiny, midway, right]{$ c_0 b_1 $};
			\draw [midarrow={>}] (D)--(C) node[font=\tiny, midway, right]{$ c_0 b_1 b_2 $};
			\draw [thick, midarrow={>}] (1.5,1.8) -- (1.5,-0.1) node[font=\tiny, midway, left]{$ c_0 b_1 b_2 b_3 $};
			\fill[blue, opacity=.7] (B) circle (3pt);
			\fill[blue, opacity=.7] (C) circle (3pt);
			\fill[blue, opacity=.7] (F) circle (3pt);
			\fill[green, opacity=.7] (D) circle (3pt);
			\draw[thick,blue,blue] (B)--(C);
			\draw [thick, midarrow={>}] (B)--(F) node[font=\tiny, midway, below]{$ b_2 b_3 $\,\,\,\,\,\,\,\,};
			\draw [dashed,thick, midarrow={>}] (C)--(F) node[font=\tiny, midway, below]{\!\!\!\!\!\!\!\!$ b_3 $};
			\draw[thick,blue,blue] (F)--(B);
			\draw[dashed,thick,blue,blue] (F)--(C);			
		\end{scope}
\end{tikzpicture}

&  \alpha( c_0 b_1 , b_2 , b_3 )^{-1} 

\\
\\

\begin{tikzpicture}
\fill[cyan,cyan, opacity=.7] (0,0)--(1.9,-0.5)--(1.5,0.1)--(0,0);
	\node[circle, fill, inner sep=.9pt, outer sep=0pt] (A) at (0,0){};
	\node[circle, fill, inner sep=.9pt, outer sep=0pt] (B) at (1.9,-0.5){};
	\node[circle, fill, inner sep=.9pt, outer sep=0pt] (E) at (1.5,-1.8){};
	\node[circle, fill, inner sep=.9pt, outer sep=0pt] (F) at (1.5,0.1){};
		\begin{scope}
			\draw [midarrow={>}] (A)--(B) node[font=\tiny, midway, below]{$ b_1 $};
			\draw [midarrow={>}] (A)--(E) node[font=\tiny, midway, left]{$ b_1 b_2 b_3 d_4 $};
			\draw [midarrow={>}] (B)--(E) node[font=\tiny, midway, right] {$ b_2 b_3 d_4 $};
			\draw [dashed,thick, midarrow={>}] (F) -- (1.5,-1.8) node[font=\tiny, midway, left]{$ d_4 $};
			\fill[blue, opacity=.7] (A) circle (3pt);
			\fill[blue, opacity=.7] (B) circle (3pt);
			\fill[blue, opacity=.7] (F) circle (3pt);
			\fill[green, opacity=.7] (E) circle (3pt);
			\draw[thick,blue,blue] (A)--(B);
			\draw [thick, midarrow={>}] (A)--(F) node[font=\tiny, midway, above]{$ b_1 b_2 b_3 $};
			\draw [thick, midarrow={>}] (B)--(F) node[font=\tiny, midway, right]{$ b_2 b_3 $};
			\draw[thick,blue,blue] (F)--(B);
			\draw[thick,blue,blue] (F)--(A);	
		\end{scope}
\end{tikzpicture}

&  \alpha( b_1 , b_2 b_3 , d_4 )^{-1} 

\\
\\

\begin{tikzpicture}
\fill[cyan,cyan, opacity=.7] (0,0)--(1.5,-0.1)--(3,0.5)--(0,0);
	\node[circle, fill, inner sep=.9pt, outer sep=0pt] (A) at (0,0){};
	\node[circle, fill, inner sep=.9pt, outer sep=0pt] (C) at (3,0.5){};
	\node[circle, fill, inner sep=.9pt, outer sep=0pt] (E) at (1.5,-1.8){};
	\node[circle, fill, inner sep=.9pt, outer sep=0pt] (F) at (1.5,-0.1){};
		\begin{scope}
			\draw [midarrow={>}] (A)--(C) node[font=\tiny, midway, above]{$ b_1 b_2 $\,\,\,\,\,\,\,};	
			\draw [midarrow={>}] (A)--(E) node[font=\tiny, midway, left]{$ b_1 b_2 b_3 d_4 $};
			\draw [midarrow={>}] (C)--(E) node[font=\tiny, midway, right]{$ b_3 d_4 $};
			\draw [thick, midarrow={>}] (F) -- (1.5,-1.8) node[font=\tiny, midway, left]{$ d_4 $};
			\fill[blue, opacity=.7] (A) circle (3pt);
			\fill[blue, opacity=.7] (C) circle (3pt);
			\fill[blue, opacity=.7] (F) circle (3pt);
			\fill[green, opacity=.7] (E) circle (3pt);
			\draw[dashed,thick,blue,blue] (A)--(C);	
			\draw [thick, midarrow={>}] (A)--(F) node[font=\tiny, midway, below]{$ b_1 b_2 b_3 $};
			\draw [thick, midarrow={>}] (C)--(F) node[font=\tiny, midway, below]{\!\!\!\!\!$ b_3 $};	
			\draw[thick,blue,blue] (F)--(C);
			\draw[thick,blue,blue] (F)--(A);		
		\end{scope}
\end{tikzpicture}

&  \alpha ( b_1 b_2 , b_3 , d_4 )

\\
\\

\begin{tikzpicture}
\fill[cyan,cyan, opacity=.7] (1.5,-0.1)--(1.9,-0.5)--(3,0.5)--(1.5,-0.1);
	\node[circle, fill, inner sep=.9pt, outer sep=0pt] (B) at (1.9,-0.5){};
	\node[circle, fill, inner sep=.9pt, outer sep=0pt] (C) at (3,0.5){};
	\node[circle, fill, inner sep=.9pt, outer sep=0pt] (E) at (1.5,-1.8){};
	\node[circle, fill, inner sep=.9pt, outer sep=0pt] (F) at (1.5,-0.1){};
		\begin{scope}
			\draw [midarrow={>}] (B)--(C) node[font=\tiny, midway, below]{\!$ b_2 $};
			\draw [midarrow={>}] (B)--(E) node[font=\tiny, midway, right] {$ b_2 b_3 d_4 $};
			\draw [midarrow={>}] (C)--(E) node[font=\tiny, midway, right]{$ b_3 d_4 $};
			\draw [thick, midarrow={>}] (F) -- (1.5,-1.8) node[font=\tiny, midway, left]{$ d_4 $};
			\fill[blue, opacity=.7] (B) circle (3pt);
			\fill[blue, opacity=.7] (C) circle (3pt);
			\fill[blue, opacity=.7] (F) circle (3pt);
			\fill[green, opacity=.7] (E) circle (3pt);
			\draw[thick,blue,blue] (B)--(C);
			\draw [thick, midarrow={>}] (B)--(F) node[font=\tiny, midway, above]{\,\,\,\,\,\,\,\,\,\,\,$ b_2 b_3 $};
			\draw [thick, midarrow={>}] (C)--(F) node[font=\tiny, midway, above]{\!\!\!\!\!$ b_3 $};	
			\draw[thick,blue,blue] (F)--(C);
			\draw[thick,blue,blue] (F)--(B);		
		\end{scope}
\end{tikzpicture}

&  \alpha( b_2 , b_3 , d_4 )^{-1} 

\\

\end{array} \right.$

\\
\\

\end{tabular}

\vspace{6mm}

Thus

\begin{center}
\begin{tabular}{llll}
$ \alpha (c_0 b_1 , b_2 , b_3 )$ & $\cdot$ & $\alpha ( b_2 , b_3 , d_4 ) $ & $\cdot$\\
$ \alpha( c_0 , b_1 b_2 , b_3 )^{-1} $ & $\cdot$ & $\alpha( b_1 b_2 , b_3 , d_4 ) ^{-1} $ & $\cdot$\\
$ \alpha (c_0 , b_1 , b_2 b_3 )$ & $\cdot$ & $\alpha ( b_1 , b_2 b_3 , d_4 ) $ & $\cdot$\\
$ \alpha( c_0 , b_1 , b_2 )^{-1} $ & $\cdot$ & $\alpha( b_1 , b_2 , b_3 d_4 )^{-1}  $ & $ = 1 .$\\
\\
\end{tabular}
\end{center}


\begin{tabular}{ll}

$\displaystyle
\begin{tikzpicture}
	\fill[cyan,cyan, opacity=.7] (0,0)--(2.5,0)--(3.5,1.2)--(1,1.2)--(0,0);
 	\node[circle, fill, inner sep=.8pt, outer sep=0pt] (A) at (0,0){};
	\node[circle, fill, inner sep=.8pt, outer sep=0pt] (B) at (2.5,0){};
	\node[circle, fill, inner sep=.8pt, outer sep=0pt] (C) at (3.5,1.2){};
	\node[circle, fill, inner sep=.8pt, outer sep=0pt] (D) at (1,1.2){};
	\node[circle, fill, inner sep=.8pt, outer sep=0pt] (E) at (1.7,3){};
	\node[circle, fill, inner sep=.8pt, outer sep=0pt] (F) at (1.7,-2){};
	\begin{scope}
		\draw [midarrow={>}] (A)--(B) node[font=\tiny, midway, below]{$ b_1 $\,\,\,};
		\draw [midarrow={>}] (B)--(C) node[font=\tiny, midway, left]{$ b_2 $};
		\draw [dashed,midarrow={>}] (C)--(D) node[font=\tiny, midway, below]{\,\,\,\,$ b_3 $};
		\draw [dashed,midarrow={>}] (A)--(D) node[font=\tiny, midway, right]{$ b_1 b_2 b_3 $};
		\draw [thick,midarrow={>}] (B)--(D) node[font=\tiny, midway, right]{$ b_2 b_3 $};
		\draw [midarrow={>}] (E)--(A) node[font=\tiny, midway, left]{$ c_0 $};
		\draw [midarrow={>}] (E)--(B) node[font=\tiny, midway, right]{$ c_0 b_1 $};
		\draw [midarrow={>}] (E)--(C) node[font=\tiny, midway, right]{$ c_0 b_1 b_2 $};
		\draw [dashed,midarrow={>}] (E)--(D) node[font=\tiny, midway, right]{$ c_0 b_1 b_2 b_3 $};
		\draw [midarrow={>}] (A)--(F) node[font=\tiny, midway, left]{$ b_1 b_2 b_3 d_4  $};
		\draw [midarrow={>}] (B)--(F) node[font=\tiny, midway, left]{$ b_2 b_3 d_4$\!\!};
		\draw [midarrow={>}] (C)--(F) node[font=\tiny, midway, right]{$ b_3 d_4 $};
		\draw [dashed,midarrow={>}] (D)--(F) node[font=\tiny, midway, right]{$ d_4 $};
		\draw[thick,blue,blue] (A)--(B)--(C);
		\draw[dashed,thick,blue,blue] (C)--(D)--(A);
		\fill[blue, opacity=.7] (A) circle (3pt);
		\fill[blue, opacity=.7] (B) circle (3pt);
		\fill[blue, opacity=.7] (C) circle (3pt);
		\fill[blue, opacity=.7] (D) circle (3pt);
		\fill[green, opacity=.7] (E) circle (3pt);
		\fill[green, opacity=.7] (F) circle (3pt);
	\end{scope}
\end{tikzpicture}$

&
 
$ \left\{ 
 \begin{array}{ll}

\begin{tikzpicture}
	\fill[cyan,cyan, opacity=.7] (0,0)--(2.5,0)--(1,1.2)--(0,0);
 	\node[circle, fill, inner sep=.8pt, outer sep=0pt] (A) at (0,0){};
	\node[circle, fill, inner sep=.8pt, outer sep=0pt] (B) at (2.5,0){};
	\node[circle, fill, inner sep=.8pt, outer sep=0pt] (D) at (1,1.2){};
	\node[circle, fill, inner sep=.8pt, outer sep=0pt] (E) at (1.7,3){};
	\begin{scope}
		\draw [midarrow={>}] (A)--(B) node[font=\tiny, midway, below]{$ b_1 $\,\,\,};
		\draw [dashed,midarrow={>}] (A)--(D) node[font=\tiny, midway, right]{$ b_1 b_2 b_3 $};
		\draw [dashed,thick,midarrow={>}] (B)--(D) node[font=\tiny, midway, right]{$ b_2 b_3 $};
		\draw [midarrow={>}] (E)--(A) node[font=\tiny, midway, left]{$ c_0 $};
		\draw [midarrow={>}] (E)--(B) node[font=\tiny, midway, right]{$ c_0 b_1 $};
		\draw [dashed,midarrow={>}] (E)--(D) node[font=\tiny, midway, right]{$ c_0 b_1 b_2 b_3 $};
		\draw[thick,blue,blue] (A)--(B);
		\draw[dashed,thick,blue,blue] (D)--(A);
		\draw[dashed,thick,blue,blue] (D)--(B);
		\fill[blue, opacity=.7] (A) circle (3pt);
		\fill[blue, opacity=.7] (B) circle (3pt);
		\fill[blue, opacity=.7] (D) circle (3pt);
		\fill[green, opacity=.7] (E) circle (3pt);
	\end{scope}
\end{tikzpicture}

& \alpha( c_0 , b_1 , b_2 b_3 )^{-1} 
\\
\\

\begin{tikzpicture}
	\fill[cyan,cyan, opacity=.7] (2.5,0)--(3.5,1.2)--(1,1.2);
	\node[circle, fill, inner sep=.8pt, outer sep=0pt] (B) at (2.5,0){};
	\node[circle, fill, inner sep=.8pt, outer sep=0pt] (C) at (3.5,1.2){};
	\node[circle, fill, inner sep=.8pt, outer sep=0pt] (D) at (1,1.2){};
	\node[circle, fill, inner sep=.8pt, outer sep=0pt] (E) at (1.7,3){};
	\begin{scope}
		\draw [midarrow={>}] (B)--(C) node[font=\tiny, midway, left]{$ b_2 $};
		\draw [dashed,midarrow={>}] (C)--(D) node[font=\tiny, midway, below]{\,\,\,\,$ b_3 $};
		\draw [thick,midarrow={>}] (B)--(D) node[font=\tiny, midway, left]{$ b_2 b_3 $};
		\draw [midarrow={>}] (E)--(B) node[font=\tiny, midway, left]{$ c_0 b_1 $};
		\draw [midarrow={>}] (E)--(C) node[font=\tiny, midway, right]{$ c_0 b_1 b_2 $};
		\draw [midarrow={>}] (E)--(D) node[font=\tiny, midway, left]{$ c_0 b_1 b_2 b_3 $};
		\draw[thick,blue,blue] (B)--(C);
		\draw[thick,blue,blue] (B)--(D);
		\draw[dashed,thick,blue,blue] (C)--(D);
		\fill[blue, opacity=.7] (B) circle (3pt);
		\fill[blue, opacity=.7] (C) circle (3pt);
		\fill[blue, opacity=.7] (D) circle (3pt);
		\fill[green, opacity=.7] (E) circle (3pt);
	\end{scope}
\end{tikzpicture}

& \displaystyle \alpha( c_0 b_1 , b_2 , b_3 )^{-1} 
\\
\\

\begin{tikzpicture}
	\fill[cyan,cyan, opacity=.7] (0,0)--(2.5,0)--(1,1.2)--(0,0);
 	\node[circle, fill, inner sep=.8pt, outer sep=0pt] (A) at (0,0){};
	\node[circle, fill, inner sep=.8pt, outer sep=0pt] (B) at (2.5,0){};
	\node[circle, fill, inner sep=.8pt, outer sep=0pt] (D) at (1,1.2){};
	\node[circle, fill, inner sep=.8pt, outer sep=0pt] (F) at (1.7,-2){};
	\begin{scope}
		\draw [midarrow={>}] (A)--(B) node[font=\tiny, midway, above]{$ b_1 $\,\,\,\,\,\,\,\,\,\,};
		\draw [midarrow={>}] (A)--(D) node[font=\tiny, midway, left]{$ b_1 b_2 b_3 $};
		\draw [thick,midarrow={>}] (B)--(D) node[font=\tiny, midway, right]{$ b_2 b_3 $};
		\draw [midarrow={>}] (A)--(F) node[font=\tiny, midway, left]{$ b_1 b_2 b_3 d_4 $};
		\draw [midarrow={>}] (B)--(F) node[font=\tiny, midway, right]{$ b_2 b_3 d_4$\!\!};
		\draw [dashed,midarrow={>}] (D)--(F) node[font=\tiny, midway, right]{$ d_4 $};
		\draw[thick,blue,blue] (A)--(B);
		\draw[thick,blue,blue] (D)--(A);
		\draw[thick,blue,blue] (D)--(B);
		\fill[blue, opacity=.7] (A) circle (3pt);
		\fill[blue, opacity=.7] (B) circle (3pt);
		\fill[blue, opacity=.7] (D) circle (3pt);
		\fill[green, opacity=.7] (F) circle (3pt);
	\end{scope}
\end{tikzpicture}

& \displaystyle \alpha ( b_1 , b_2 b_3 , d_4 )^{-1}
\\
\\

\begin{tikzpicture}
	\fill[cyan,cyan, opacity=.7] (2.5,0)--(3.5,1.2)--(1,1.2);
	\node[circle, fill, inner sep=.8pt, outer sep=0pt] (B) at (2.5,0){};
	\node[circle, fill, inner sep=.8pt, outer sep=0pt] (C) at (3.5,1.2){};
	\node[circle, fill, inner sep=.8pt, outer sep=0pt] (D) at (1,1.2){};
	\node[circle, fill, inner sep=.8pt, outer sep=0pt] (F) at (1.7,-2){};
	\begin{scope}
		\draw [midarrow={>}] (B)--(C) node[font=\tiny, midway, left]{$ b_2 $};
		\draw [midarrow={>}] (C)--(D) node[font=\tiny, midway, above]{\,\,\,\,$ b_3 $};
		\draw [thick,midarrow={>}] (B)--(D) node[font=\tiny, midway, right]{$ b_2 b_3 $};
		\draw [midarrow={>}] (B)--(F) node[font=\tiny, midway, left]{$ b_2 b_3 d_4$\!\!};
		\draw [midarrow={>}] (C)--(F) node[font=\tiny, midway, right]{$ b_3 d_4 $};
		\draw [midarrow={>}] (D)--(F) node[font=\tiny, midway, right]{$ d_4 $};
		\draw[thick,blue,blue] (B)--(C);
		\draw[thick,blue,blue] (C)--(D);
		\draw[thick,blue,blue] (B)--(D);
		\fill[blue, opacity=.7] (B) circle (3pt);
		\fill[blue, opacity=.7] (C) circle (3pt);
		\fill[blue, opacity=.7] (D) circle (3pt);
		\fill[green, opacity=.7] (F) circle (3pt);
	\end{scope}
\end{tikzpicture}

& \displaystyle \alpha( b_2 , b_3 , d_4 )^{-1} 
\\

\end{array} \right.$

\end{tabular}

\newpage

\begin{tabular}{ll}

$\displaystyle 
\begin{tikzpicture}
	\fill[cyan,cyan, opacity=.7] (0,0)--(2.5,0)--(3.5,1.2)--(1,1.2)--(0,0);
 	\node[circle, fill, inner sep=.8pt, outer sep=0pt] (A) at (0,0){};
	\node[circle, fill, inner sep=.8pt, outer sep=0pt] (B) at (2.5,0){};
	\node[circle, fill, inner sep=.8pt, outer sep=0pt] (C) at (3.5,1.2){};
	\node[circle, fill, inner sep=.8pt, outer sep=0pt] (D) at (1,1.2){};
	\node[circle, fill, inner sep=.8pt, outer sep=0pt] (E) at (1.7,3){};
	\node[circle, fill, inner sep=.8pt, outer sep=0pt] (F) at (1.7,-2){};
	\begin{scope}
		\draw [midarrow={>}] (A)--(B) node[font=\tiny, midway, below]{$ b_1 $\,\,\,};
		\draw [midarrow={>}] (B)--(C) node[font=\tiny, midway, left]{$ b_2 $};
		\draw [dashed,midarrow={>}] (C)--(D) node[font=\tiny, midway, below]{\,\,\,\,$ b_3 $};
		\draw [dashed,midarrow={>}] (A)--(D) node[font=\tiny, midway, right]{$ b_1 b_2 b_3 $};
		\draw [thick,midarrow={>}] (A)--(C) node[font=\tiny, midway, below]{$ b_1 b_2 $};
		\draw [midarrow={>}] (E)--(A) node[font=\tiny, midway, left]{$ c_0 $};
		\draw [midarrow={>}] (E)--(B) node[font=\tiny, midway, right]{$ c_0 b_1 $};
		\draw [midarrow={>}] (E)--(C) node[font=\tiny, midway, right]{$ c_0 b_1 b_2 $};
		\draw [dashed,midarrow={>}] (E)--(D) node[font=\tiny, midway, right]{$ c_0 b_1 b_2 b_3 $};
		\draw [midarrow={>}] (A)--(F) node[font=\tiny, midway, left]{$ b_1 b_2 b_3 d_4 $};
		\draw [midarrow={>}] (B)--(F) node[font=\tiny, midway, left]{$ b_2 b_3 d_4$\!\!};
		\draw [midarrow={>}] (C)--(F) node[font=\tiny, midway, right]{$ b_3 d_4 $};
		\draw [dashed,midarrow={>}] (D)--(F) node[font=\tiny, midway, right]{$ d_4 $};
		\draw[thick,blue,blue] (A)--(B)--(C);
		\draw[dashed,thick,blue,blue] (C)--(D)--(A);
		\fill[blue, opacity=.7] (A) circle (3pt);
		\fill[blue, opacity=.7] (B) circle (3pt);
		\fill[blue, opacity=.7] (C) circle (3pt);
		\fill[blue, opacity=.7] (D) circle (3pt);
		\fill[green, opacity=.7] (E) circle (3pt);
		\fill[green, opacity=.7] (F) circle (3pt);
	\end{scope}
\end{tikzpicture}$

&
 
$ \left\{ 
 \begin{array}{ll}

\begin{tikzpicture}
	\fill[cyan,cyan, opacity=.7] (0,0)--(2.5,0)--(3.5,1.2)--(0,0);
 	\node[circle, fill, inner sep=.8pt, outer sep=0pt] (A) at (0,0){};
	\node[circle, fill, inner sep=.8pt, outer sep=0pt] (B) at (2.5,0){};
	\node[circle, fill, inner sep=.8pt, outer sep=0pt] (C) at (3.5,1.2){};
	\node[circle, fill, inner sep=.8pt, outer sep=0pt] (E) at (1.7,3){};
	\begin{scope}
		\draw [midarrow={>}] (A)--(B) node[font=\tiny, midway, below]{$ b_1 $\,\,\,};
		\draw [midarrow={>}] (B)--(C) node[font=\tiny, midway, left]{$ b_2 $};
		\draw [dashed,thick,midarrow={>}] (A)--(C) node[font=\tiny, midway, below]{$ b_1 b_2 $};
		\draw [midarrow={>}] (E)--(A) node[font=\tiny, midway, left]{$ c_0 $};
		\draw [midarrow={>}] (E)--(B) node[font=\tiny, midway, right]{$ c_0 b_1 $};
		\draw [midarrow={>}] (E)--(C) node[font=\tiny, midway, right]{$ c_0 b_1 b_2 $};
		\draw[thick,blue,blue] (A)--(B)--(C);
		\draw[dashed,thick,blue,blue] (C)--(A);
		\fill[blue, opacity=.7] (A) circle (3pt);
		\fill[blue, opacity=.7] (B) circle (3pt);
		\fill[blue, opacity=.7] (C) circle (3pt);
		\fill[green, opacity=.7] (E) circle (3pt);
	\end{scope}
\end{tikzpicture}

& \alpha( c_0 , b_1 , b_2 )^{-1} 
\\
\\

\begin{tikzpicture}
	\fill[cyan,cyan, opacity=.7] (0,0)--(3.5,1.2)--(1,1.2)--(0,0);
 	\node[circle, fill, inner sep=.8pt, outer sep=0pt] (A) at (0,0){};
	\node[circle, fill, inner sep=.8pt, outer sep=0pt] (C) at (3.5,1.2){};
	\node[circle, fill, inner sep=.8pt, outer sep=0pt] (D) at (1,1.2){};
	\node[circle, fill, inner sep=.8pt, outer sep=0pt] (E) at (1.7,3){};
	\begin{scope}
		\draw [dashed,midarrow={>}] (C)--(D) node[font=\tiny, midway, below]{\,\,\,\,$ b_3 $};
		\draw [dashed,midarrow={>}] (A)--(D) node[font=\tiny, midway, left]{$ b_1 b_2 b_3 $};
		\draw [thick,midarrow={>}] (A)--(C) node[font=\tiny, midway, below]{$ b_1 b_2 $};
		\draw [midarrow={>}] (E)--(A) node[font=\tiny, midway, left]{$ c_0 $};
		\draw [midarrow={>}] (E)--(C) node[font=\tiny, midway, right]{$ c_0 b_1 b_2 $};
		\draw [dashed,midarrow={>}] (E)--(D) node[font=\tiny, midway, right]{$ c_0 b_1 b_2 b_3 $};
		\draw[thick,blue,blue] (A)--(C);
		\draw[dashed,thick,blue,blue] (C)--(D)--(A);
		\fill[blue, opacity=.7] (A) circle (3pt);
		\fill[blue, opacity=.7] (C) circle (3pt);
		\fill[blue, opacity=.7] (D) circle (3pt);
		\fill[green, opacity=.7] (E) circle (3pt);
	\end{scope}
\end{tikzpicture}

& \displaystyle \alpha( c_0 , b_1 b_2 , b_3 )^{-1} 
\\
\\

\begin{tikzpicture}
	\fill[cyan,cyan, opacity=.7] (0,0)--(2.5,0)--(3.5,1.2)--(0,0);
 	\node[circle, fill, inner sep=.8pt, outer sep=0pt] (A) at (0,0){};
	\node[circle, fill, inner sep=.8pt, outer sep=0pt] (B) at (2.5,0){};
	\node[circle, fill, inner sep=.8pt, outer sep=0pt] (C) at (3.5,1.2){};
	\node[circle, fill, inner sep=.8pt, outer sep=0pt] (F) at (1.7,-2){};
	\begin{scope}
		\draw [midarrow={>}] (A)--(B) node[font=\tiny, midway, below]{$ b_1 $\,\,\,};
		\draw [midarrow={>}] (B)--(C) node[font=\tiny, midway, left]{$ b_2 $};
		\draw [thick,midarrow={>}] (A)--(C) node[font=\tiny, midway, below]{$ b_1 b_2 $};
		\draw [midarrow={>}] (A)--(F) node[font=\tiny, midway, left]{$ b_1 b_2 b_3 d_4 $};
		\draw [midarrow={>}] (B)--(F) node[font=\tiny, midway, left]{$ b_2 b_3 d_4$\!\!};
		\draw [midarrow={>}] (C)--(F) node[font=\tiny, midway, right]{$ b_3 d_4 $};
		\draw[thick,blue,blue] (A)--(B)--(C);
		\draw[thick,blue,blue] (C)--(A);
		\fill[blue, opacity=.7] (A) circle (3pt);
		\fill[blue, opacity=.7] (B) circle (3pt);
		\fill[blue, opacity=.7] (C) circle (3pt);
		\fill[green, opacity=.7] (F) circle (3pt);
	\end{scope}
\end{tikzpicture}

& \displaystyle \alpha( b_1 , b_2 , b_3 d_4 )^{-1} 
\\
\\

\begin{tikzpicture}
	\fill[cyan,cyan, opacity=.7] (0,0)--(3.5,1.2)--(1,1.2)--(0,0);
 	\node[circle, fill, inner sep=.8pt, outer sep=0pt] (A) at (0,0){};
	\node[circle, fill, inner sep=.8pt, outer sep=0pt] (C) at (3.5,1.2){};
	\node[circle, fill, inner sep=.8pt, outer sep=0pt] (D) at (1,1.2){};
	\node[circle, fill, inner sep=.8pt, outer sep=0pt] (F) at (1.7,-2){};
	\begin{scope}
		\draw [dashed,midarrow={>}] (C)--(D) node[font=\tiny, midway, below]{\,\,\,\,$ b_3 $};
		\draw [dashed,midarrow={>}] (A)--(D) node[font=\tiny, midway, left]{$ b_1 b_2 b_3 $};
		\draw [thick,midarrow={>}] (A)--(C) node[font=\tiny, midway, below]{$ b_1 b_2 $};
		\draw [midarrow={>}] (A)--(F) node[font=\tiny, midway, left]{$ b_1 b_2 b_3 d_4 $};
		\draw [midarrow={>}] (C)--(F) node[font=\tiny, midway, right]{$ b_3 d_4 $};
		\draw [dashed,midarrow={>}] (D)--(F) node[font=\tiny, midway, right]{$ d_4 $};
		\draw[thick,blue,blue] (A)--(C);
		\draw[thick,blue,blue] (C)--(D)--(A);
		\fill[blue, opacity=.7] (A) circle (3pt);
		\fill[blue, opacity=.7] (C) circle (3pt);
		\fill[blue, opacity=.7] (D) circle (3pt);
		\fill[green, opacity=.7] (F) circle (3pt);
	\end{scope}
\end{tikzpicture}

& \displaystyle \alpha( b_1 b_2 , b_3 , d_4 )^{-1} 
\\

\end{array} \right.$

\end{tabular}

\bigskip
Thus again we must have\smallskip

\begin{center}
\begin{tabular}{llll}
$ \alpha (c_0 b_1 , b_2 , b_3 )$ & $\cdot$ & $\alpha ( b_2 , b_3 , d_4 ) $ & $\cdot$\\
$ \alpha( c_0 , b_1 b_2 , b_3 )^{-1} $ & $\cdot$ & $\alpha^( b_1 b_2 , b_3 , d_4 ){-1}  $ & $\cdot$\\
$ \alpha (c_0 , b_1 , b_2 b_3 )$ & $\cdot$ & $\alpha ( b_1 , b_2 b_3 , d_4 ) $ & $\cdot$\\
$ \alpha ( c_0 , b_1 , b_2 )^{-1}$ & $\cdot$ & $\alpha( b_1 , b_2 , b_3 d_4 )^{-1}  $ & $ = 1 .$\\
\\
\end{tabular}
\end{center}
\smallskip



It remains to construct examples of $\Gamma_2$ parcels equipped with partial cocycles.  As in previous similar work (cf. \cite{DPY}) it is possible to harness group cohomology to construct the examples.

Consider a group $G$, and a groupoid $G_2$ with objects $\beta$ and $\delta$ equivalent to $G$ by functors
$V:G_2\rightarrow G$ and $W:G\rightarrow G_2$ (for definiteness, $W$ maps the unique object to $\beta$).

Any $d$-cocycle on $G$ pulls back to a $d$-cocycle on the groupoid $G_2$, which will, in turn, restrict to a $d$-cocycle on any subcategory of $G_2$, thus, provided the subcategory admits a $\Gamma_2$-parcel structure, giving coefficients satisfying the cocycle-type conditions for a partial cocycle other than the one arising from the extended Pachner moves at the defect.

Provided $G$ has both finite subgroups (of course one can use the trivial subgroup) and elements of infinite order normalizing them, it is easy to construct a subcategory of $G_2$ which admits a parcel structure:  Choose two (possibly equal) finite subgroups $G_\beta$ and $G_\delta$, and elements $\phi$ and $\psi$ with $\phi\psi$  of infinite order and normalizing $G_\beta$ and $\psi\phi$ (necessarily also of infinite order) normalizing $G_\delta$ and consider the subcategory $\mathcal C$ of $G_2$ generated by $V^{-1}(G_\beta) \cap G_2(\beta, \beta)$, $V^{-1}(G_\delta) \cap G_2(\delta, \delta)$,  $V^{-1}(\phi) \cap G_2(\beta, \delta)$ and $V^{-1}(\psi) \cap G_2(\delta, \beta)$.
 Note that the first two of the generating sets are copies of the finite groups in the local group of the object indicated by the subscript, while the latter two are singletons.

Mapping $G_\beta$ to $Id_\beta$, $G_\delta$ to $Id_\delta$, the chosen preimage of $\phi$ to the generating edge $\iota$ and the chosen preimage of $\psi$ to the generating edge $\omega$ induces a conservative functor from $\mathcal C$ to $\mathcal P(\Gamma_2)$.

To give partial cocycles on such a parcel, it remains only to find $d$-cocycles which satisfy the additional equation.  If $d$ is odd, every $d$-cocyle on $G$ induces a partial cocycle on $\mathcal C$, since the left hand side of the condition corresponding to extended Pachner moves can be multiplied by $\alpha(b_1,\ldots b_d)^{-1}$ and $\alpha(b_1,\dots b_d)^{(-1)^{d+1}} = \alpha(b_1,\dots b_d)$, the first multiplied the factors involving $c_0$ gives 1 by an instance of the $d$-cocycle condition, as does the latter multiplied by the factors involving $d_{d+1}$, thus giving a partial cocycle.

If $d$ is even, this cancellation is impossible, unless the restriction of $\alpha$ to $G_\delta^d$ takes values in $\{+1,-1\}$, in which case the same argument gives a partial cocycle.

Groups with the needed structure are readily available:  choose finite groups $G_\beta$ and $G_\delta$ (ideally with interestingly non-trivial group cohomology) and let $G = G_\beta \times G_\delta \times {\mathbb Z}$, determining its cohomology via the Kunneth formula.  We suspect others can be found in which the infinite degree elements only normalize, but do not centralize, the subgroups, but have not investigated this question.

\section{Prospects for Future Research}

The present work opens up a great many avenues for future research, some arising from questions left uninvestigated in this paper -- describing in detail the fundamental category of directed spaces of the forms $(X,d_{ror}(X))$ and $(X,d_{for}(X))$, ideally in terms related to the stratification from which the directed space structure was constructed, generalizing the constructions of the last section to all $d$-spaces of the form $(X,d_{for}(X))$ -- others arising simply from the possibility of applying the tools of directed algebraic topology to
the study of stratified spaces.

Among the latter, in no particular order, are 

\begin{itemize}
\item Investigation of the higher directed homotopy structure of the directed spaces arising from collared stratifications, both $(X,d_{for}(X))$ and $(X,d_{ror}(X))$.
\item Investigation of $\uparrow \Pi_1(X,d_{source}(X))$ for the stratified spaces in general.  In particular for complex algebraic varieties stratified by iterated singular sets, to what extent does the fundamental category detect the fine structure of the singularities?
\item Investigation of the higher directed homotopy structure of directed spaces given by the source (or target) preorder on a stratified (or even general filtered) space.
\item Application of the directed homology theory of \cite{DGG} to the study of stratified spaces.
\item Generalizing the constructions of TQFTs from higher homotopy types (cf. \cite{P, Y2}) to defect TQFTs using directed space structures associated to the stratification by defects.
\item Investigation of how directed space structures interact with other state-sum constructions of TQFTs (as, for instance Turaev-Viro theory \cite{TV} and Crane-Yetter theory \cite{CY2})
\item Investigation of groups suitable for the construction of $\Gamma_2$-parcels more general than the products considered above.
\end{itemize}

\end{document}